\newcommand{\one}{\mathsf{\textbf{1}}}
\begin{document}

\title{Free CIR Processes
}


\author{Holger Fink         \and
        Henry Port$^{*}$   \and  
        Georg Schl\"{u}chtermann 
}


\institute{Holger Fink \at        
              Department of Computer Science and Mathematics, Munich University of Applied Sciences, Munich, Germany;
              Center for Quantitative Risk Analysis CEQURA, Ludwig-Maximilians-Universit\"at M\"unchen, Akademiestr. 1/I, 80799 Munich, Germany\\
              \email{holger.fink@hm.edu}           
           \and
           Henry Port$^{*}$ \at
              Chair of Financial Econometrics, Institute of Statistics, Ludwig-Maximilians-Universit\"at M\"unchen, Akademiestr. 1/I, 80799 Munich, Germany;
              Center for Quantitative Risk Analysis CEQURA, Ludwig-Maximilians-Universit\"at M\"unchen, Akademiestr. 1/I, 80799 Munich, Germany\\
              \email{henry.port@stat.uni-muenchen.de}\\
              {*} Corresponding author
           \and 
           Georg Schl\"{u}chtermann \at 
           Department of Mechanical, Automotive and Aeronautical  Engineering, Munich University of Applied Sciences, Munich, Germany;
           Faculty of Mathematics, Computer Science and Statistics, Ludwig-Maximilians-Universit\"at M\"unchen, Theresienstrasse 39/I, 80333 Munich, Germany\\ \email{georg.schluechtermann@hm.edu}
              }

\date{Received: date / Accepted: date}

\maketitle
\begin{abstract}
For stochastic processes of non-commuting random variables we formulate a Cox-Ingersoll-Ross (CIR) stochastic differential equation in the context of free probability theory which was introduced by D. Voicelescu. By transforming the classical CIR equation and the Feller condition, 
which ensures the existence of a positive solution, into the free setting (in the sense of having a strictly positive spectrum), we show the global existence for a free CIR equation. The main challenge lies in the transition from a stochastic differential equation driven by a classical Brownian motion to a stochastic differential equation driven by the free analogue to the classical Brownian motion, the so-called free Brownian motion. 
\keywords{free probability \and CIR \and volatility \and stochastic differential equations \and stochastic processes}
\subclass{60H10 \and 65C30 \and 46L54 \and 97M30}
\end{abstract}
\section{Introduction}
The Black-Scholes framework is considered as one of the benchmarks for modeling the price process $(X(t))_{t\geq0}$ of an underlying asset. Initially mentioned in \cite{black1973pricing}, it is based on the stochastic differential equation (SDE) $$\text{d} X(t) = \mu X(t) \text{d}t + \sigma X(t) \text{d} B_{1}(t),$$
for $t\in[0,\infty[,$ where $\mu\in\mathbb{R},\;\sigma>0$ and $(B_1(t))_{t\geq0}$ denotes a Brownian motion. 

One major drawback of this model is the fact that it does not account for certain common properties of financial data, known as stylized facts, in particular, volatility clustering and the so-called leverage effect. The first term refers to the fact that volatility exhibits a highly autocorrelated structure and the second to the negative correlation between volatility and returns frequently found in financial data (cf. \cite{pagan1996econometrics, mandelbrot1997variation,cont2001empirical}).

A possible solution is to model the variance separately as a time-dependent stochastic process that accounts for these effects and to ensure that it returns to an average value in finite time, which is referred to as mean-reversion, and stays positive, as well.

In 1993 \cite{heston1993closed} addressed these issues by allowing the variance to be modeled by the Cox, Ingersoll and Ross (CIR) process as developed by \cite{cir}, via the SDE $$\text{d} s(t)^2 = (a-b s(t)^2)\text{d}t + \sigma\sqrt{s(t)^2} \text{d}B_{2}(t),$$
for $t\in [0,\infty[,$ with $a,b,\sigma>0$  and $(B_{2}(t))_{t\geq0}$ another Brownian motion s.t. $\rho \text{d} t =\text{d} B_{1}(t) \text{d} B_{2}(t),$ with 
$-1\leq\rho\leq1.$ \cite{feller1951two} has shown that as long as the so-called Feller condition 
\begin{equation}\label{feller}
2a\geq\sigma^2 
\end{equation}
holds there exists a positive unique solution. 

Since its discovery the CIR equation has been a vivid field of research and underwent several modifications: 
In order to model potential long memory effects in volatility which were observed by some researchers (cf. \cite{baillie1996fractionally,bollerslev1996modeling}), \cite{comte1998long} implemented a fractional Brownian motion as the driving process.
The fractional CIR equation has been studied, among others e.g. by \cite{comte2012affine,schluchtermann2016note} and \cite{fink2018fractional}.

 Yet another modification is the implementation of a Hurst index of $H<\frac{1}{2}.$ The resulting so-called rough Heston models give a good mixture of a decent fit to historical data and implied volatility without touching upon the curse of dimensions (cf. \cite{alos2007short,gatheral2018volatility,el2018perfect,jaber2018multi,el2019characteristic}).

The task of modeling prices for two or more assets on the other hand poses additional challenges. Taking covariances into account necessitates a joint model.
A promising candidate is given by Wishart autoregression processes, which were introduced in \cite{bru1991wishart}. Since Wishart processes do not need additional constraints to ensure positive definiteness almost surely (cf. \cite{gourieroux2006continuous}), they are suited especially well for the role of a matrix-valued CIR process. 

Nevertheless, with an increasing number of assets the complexity of volatility models adequately describing these systems increases rapidly, demanding the usage of more variables and therefore the curse of dimension (cf. \cite{gourieroux2010derivative}) threatens a feasible application. 

Some matrix ensembles such as the GUE (Gaussian unitary ensemble) and the  Wishart random matrices (these are of the form $\frac{1}{N}X X^{*},$ where $X$ is $N\times M$ random matrix with independent Gaussian entries and $X^{*}$ denotes its adjoint) behave like so-called free random variables in (eigenvalue) distribution, when their size gets very large, which is referred to as ``asymptotic freeness'' (cf. \cite{mingo2017free}). The limiting eigenvalue distribution of the former is given by the semi-circular distribution and for the latter by the Marchenko-Pastur distribution (also known as ``free Poisson distribution'').  
Since these matrix ensembles behave like free random variables in high dimensions we may employ the tools of free probability theory, which allow for concepts like convolution and a pendant to the central-limit theorem (cf. \cite{nourdin2014central}) to adequately work with random matrices and operators in the probabilistic context.

We propose so-called free CIR (stochastic differential) equations in the context of Voicelescu's ``free probability theory'' (cf. ``Background and outlook'' in \cite{voiculescu2016free}), which is basically a non-commutative analogue to classical probability theory with a special rule to calculate joint moments, called freeness.


Free stochastic calculus first appeared in \cite{speicher1990new}.
This theory was further developed by \cite{kummerer1992stochastic,biane1997free} and \cite{biane1998stochastic}, where among other ground-laying definitions, the notion of free stochastic processes and a free Brownian motion were introduced. For an in-depth study of the free It\^{o} integral, we refer to \cite{anshelevich2002ito}.  
This framework allows for defining the notion of free SDEs. 
In particular, free stochastic processes form a vivid research area (cf. \cite{biane1998processes,biane2001free,barndorff2002self,fan2006self,gao2006free,gao2008free,an2015poisson}).

Some processes, such as the item of interest of this paper, the CIR process, arise naturally as the solution of SDEs.
A special class of free SDEs is studied in depth in \cite{kargin2011free}, where the author provided proofs for the existence and uniqueness of a local solution of free SDEs of the form 
\begin{equation*}
\text{d}X(t)=a(X(t))\text{d}t+\displaystyle\sum_{k=1}^m b_k (X(t))\text{d}W(t) c_k (X(t)),
\end{equation*} 
for $t\in[0,\infty[,$ where $(W(t))_{t\geq0}$ is a free Brownian motion and $a, b_k, c_k$ are locally operator Lipschitz functions. To recall: We call a function $f:\mathbb{R}\rightarrow\mathbb{C}$ locally operator Lipschitz, if it is locally bounded, measurable, and if we also have that for all $C>0,$ there is a constant $K(C) > 0,$ with the property that $\lVert f(X) - f(Y)\rVert \leq K(C) \lVert X - Y \rVert,$ provided that $X$ and $Y$ both are self-adjoint operators such that $\lVert X \rVert , \lVert Y \rVert < C,$ with $\lVert \cdot \rVert$ denoting the operator norm. 

Unfortunately due to the fact that the coefficient functions do not fulfill the necessary locally operator Lipschitz condition, the local existence theorem does not imply the existence of a solution to the non-classical free CIR equation of the form 
\begin{equation}\label{classfreecirneu}
\text{d}X(t)=\left(a(t)-b X(t)\right)\text{d}t+\frac{\sigma(t)}{2} \sqrt{X(t)} \text{d}W(t) + \text{d}W(t) \sqrt{X(t)}\frac{\sigma(t)}{2},\quad X(0)=X_0 \in\mathcal{A}_+,
\end{equation}
for $a(t),\sigma(t)\in\mathcal{A}_+ , b>0$ and $t\in[0,\infty[.$
We denote by $\mathcal{A}_+$ the self-adjoint elements of the von Neumann algebra $\mathcal{A}$ with a strictly positive spectrum. Those are representable by the square of a self-adjoint operator which gives meaning to the expression $\sqrt{A}.$
Therefore a solution to this equation needs to be bounded below by zero (in the sense of having a positive spectrum), just as in the scalar-valued case. We introduce and show the existence of the non-classical free CIR equation and derive an equivalent condition to the classical Feller condition. Note that for possibly time-dependent operators $a(t), b(t), \sigma(t)^2$ the Feller condition is to be understood in the sense of $2 a(t) - \sigma(t)^2$ having a non-negative spectrum. When referring to the Feller condition \eqref{feller} in the context of operators that is how it is meant to be interpreted. 
Our main theorem states the global existence of a positive solution to the non-classical free CIR equation \eqref{classfreecirneu}. Additionally, by a simple argument, we get the same for the classical free CIR equation of the form
$$\text{d}X(t)=\left(a(t)-b X(t)\right)\text{d}t+\sqrt[\leftroot{0}\uproot{1}4]{X(t)}\sqrt{\sigma(t)} \text{d}W(t)\sqrt{\sigma(t)}\sqrt[\leftroot{0}\uproot{1}4]{X(t)},\quad X(0)=X_0 \in\mathcal{A}_+,$$
for $a(t),\sigma(t)\in\mathcal{A}_+ , b>0$ and $t\in[0,\infty[,$
which has a different solution than the non-classical free CIR equation, due to the non-commutativity of the process. Note that both free CIR equations have to be formulated in this way to ensure the selfadjointness needed for the positivity of the solution. In contrast to the free (non-commutative) cases, in the scalar-valued case both equations would be equal to the classical form $\text{d}X(t)=\left(a(t)-b X(t)\right)\text{d}t+\sigma(t)\sqrt{X(t)} \text{d}W(t)$ . 

\section{Free Probability and Free Stochastic Calculus}\label{basics}


In the following we give a short introduction to the theory of free probability and free SDEs. 
For the theory of operator theory and in particular von Neumann algebras we refer the reader to the multivolume works ``Theory of operator algebras'' by Takesaki (cf. \cite{takesaki1979theory,takesaki2013theory,takesaki2003theory}) and \cite{murphy2014c}. For an introduction to free probability we refer to \cite{voiculescu1992free,nica2006lectures,voiculescu2016free} and \cite{mingo2017free}. 
In the following we will draw heavily on \cite{biane1998stochastic} and \cite{kargin2011free}.

\begin{definition}
Let the tupel $\left(\mathcal{A}, \varphi\right)$ be a non-commutative probability space (cf. \cite{nourdin2012selected}), that is $\mathcal{A}$ is a von Neumann algebra and $\varphi$ a faithful, normal and unital trace on $\mathcal{A}.$ The self-adjoint elements of $X\in \mathcal{A}$ are referred to as (free) random variables.
\end{definition}
The trace $\varphi$ induces the p-norms $\lVert\cdot\rVert_p$ by 
\begin{equation*}
\lVert X \rVert_{p}=\varphi\left(\lvert X\rvert^p\right)^{\frac{1}{p}},\quad 1\leq p \leq\infty,
\end{equation*}
where $\lVert\cdot\rVert:=\lVert\cdot\rVert_{\infty}$ is the operator norm.
For more information on non-commutative integration see e.g. \cite{FackKos1986} and \cite{Terp1981}.

We proceed by defining the notion of free independence or freeness, as introduced by Voiculescu.
\begin{definition}
Let $\mathcal{A}_1, \dots, \mathcal{A}_n$ be subalgebras of $\mathcal{A}$ and let the corresponding elements be denoted by $A_{i}\in\mathcal{A}_{i}.$ We call $\mathcal{A}_1, \dots, \mathcal{A}_n$ free if
\begin{equation*}
\varphi\left(A_{i(1)}\dots A_{i(m)}\right)=0
\end{equation*}
whenever 
$$\varphi\left(A_{i(s)}\right)=0\text{ and }i(s+1)\neq i(s)\quad\text{for each }s.$$
We call random variables free if the algebras they generate are free.
\end{definition}
In order to introduce free SDEs, a ``free'' notion of Brownian motion is necessary.
\begin{definition}
A free Brownian motion is a stochastic process $(W(t))_{t\geq0}$ of elements in a von Neumann algebra with the following three properties: 
$W(0)=0, $ the increments $W(t)-W(s)$ are free from $\mathcal{W}_s=\left<W(\tau)\;|\;\tau\leq s\right> $ (the von Neumann algebra generated by the $W(\tau)$ with $\tau\leq s$) for $t>s,$ and $W(t)-W(s)$ follows a semicircular distribution with expectation $\varphi\left(W(t)-W(s)\right)=0$ and variance $\varphi\left(\left(W(t)-W(s)\right)^2\right)=t-s$ (cf. \cite{nourdin2012selected}).
\end{definition}
We proceed by introducing an integral of the form 
\begin{equation*}
\mathcal{I}=\int_0^T a\left(X(s)\right)\text{d}W(s) b\left(X(s)\right),
\end{equation*}
where $a\left(X(s)\right)$ and $b\left(X(s)\right)$ are operator-valued functions of $X(s)$ and $T>0$. Certain properties on $a$ resp. $b$ will be specified later. For a detailed construction of this free stochastic integral, we refer to \cite{anshelevich2002ito} or \cite{biane1998stochastic}. We will describe the general construction shortly as it is done in \cite{kargin2011free}.


Given an interval $[0,T]$ and $s\in[0,T],$ 
we let $a_s,b_s\in\mathcal{W}_s$ and assume that  $s\mapsto a_s$ and $s\mapsto b_s$ are continuous maps w.r.t. to $\lVert\cdot\rVert.$ \color{black} Consider 
$s_0,\dots,s_n,\tau_1,\dots,\tau_n\in\mathbb{R}$ with $0=s_0\leq s_1\leq\dots\leq s_n=T$
and $0\leq\tau_k\leq s_{k-1}$.
We denote the collection of all $s_i$ and $\tau_j$ by $\Delta:=\{s_i,\tau_j\;|\;0\leq i\leq n\;,\;1\leq j\leq n\}.$
Consider the expression 
\begin{equation*}
\mathcal{I}\left(\Delta\right)=\displaystyle\sum_{i=1}^n a_{\tau_i}\left(W(s_i)-W(s_{i-1})\right)b_{\tau_i}.
\end{equation*}
For $d\left(\Delta\right)=\max_{1\leq k\leq n}(s_k-\tau_k)$ we get that
\begin{equation*}
\lim_{d\left(\Delta\right)\rightarrow 0} \mathcal{I}\left(\Delta\right)=\mathcal{I},
\end{equation*}
where the limit is meant w.r.t. $\lVert\cdot\rVert$ and is independent of the choice of $s_i$ and $\tau_i.$ 
$\mathcal{I}$ is called the ``free stochastic integral''.
The convergence relies heavily on the so-called free Burkholder-Gundy inequality (see \cite[Theorem~3.2.1]{biane1998stochastic}).
In the following lemma we state the free analogues of the It\^{o} formula in an abbreviated form as we will use later on. For a detailed discussion we refer to \cite{biane1998stochastic}. 
\begin{lemma}[Free It\^{o}]\label{L:freeIto}  Let $a_t , b_t , c_t , d_t $ be operator-valued functions  and $(W(t))_{t\geq0}$ as above. 
Then 
\begin{eqnarray*}
a_t \text{d}t \cdot b_t \text{d}t&=& a_t\text{d}t \cdot b_t \text{d}W(t) c_t= a_t \text{d}W(t)  b_t \cdot c_t \text{d}t=0\\
a_t \text{d}W(t) b_t \cdot c_t \text{d}W(t) d_t &=& \varphi\left(b_t c_t\right) a_t d_t\text{d}t
\end{eqnarray*}
\end{lemma}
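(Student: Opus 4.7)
The plan is to verify each differential identity by passing to the limit in the Riemann-type approximation scheme used to define the free stochastic integral. Fix a partition $\Delta=\{s_i,\tau_i\}$ as in the construction above and write $\Delta s_i := s_i-s_{i-1}$ and $\Delta W_i := W(s_i)-W(s_{i-1})$. In each case I compute the product of the two simple-integrand approximations on a common partition and then track the orders of magnitude as $d(\Delta)\to 0$, using that $\lVert \Delta W_i\rVert = 2\sqrt{\Delta s_i}$ by the semicircular law.

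The first three vanishing identities are the easy ones. For $a_t\,\text{d}t\cdot b_t\,\text{d}t$ the product yields $\sum_i a_{\tau_i}b_{\tau_i}(\Delta s_i)^2$, bounded in operator norm by $\max_i\Delta s_i\cdot T\cdot\lVert a\rVert_\infty\lVert b\rVert_\infty\to 0$. For the mixed $\text{d}t\cdot\text{d}W$ terms each summand has norm $O(\Delta s_i\cdot\sqrt{\Delta s_i})$, so summing gives a term of order $T\sqrt{\max_i\Delta s_i}\to 0$. In each case the continuity assumption on $s\mapsto a_s,b_s,c_s$ plus the operator-norm bound on increments is enough.

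The heart of the proof is the identity $a_t\,\text{d}W(t)\,b_t\cdot c_t\,\text{d}W(t)\,d_t=\varphi(b_tc_t)a_td_t\,\text{d}t$. On the partition level this reads
\begin{equation*}
\sum_i a_{\tau_i}\,\Delta W_i\, b_{\tau_i} c_{\tau_i}\,\Delta W_i\, d_{\tau_i}\;\longrightarrow\;\int_0^T\varphi(b_sc_s)\,a_s d_s\,\text{d}s.
\end{equation*}
Since $a_{\tau_i},b_{\tau_i},c_{\tau_i},d_{\tau_i}\in\mathcal{W}_{\tau_i}\subseteq\mathcal{W}_{s_{i-1}}$ are free from the increment $\Delta W_i$, the key computation is the free-conditional-expectation identity $E_{\mathcal{W}_{s_{i-1}}}[\Delta W_i\,X\,\Delta W_i]=\varphi(X)\,\Delta s_i\cdot \one$ for any $X\in\mathcal{W}_{s_{i-1}}$, which is a direct consequence of the semicircular distribution of $\Delta W_i$ and freeness (the only nonvanishing free cumulant of $\Delta W_i$ is the second, equal to $\Delta s_i$). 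Applying this with $X=b_{\tau_i}c_{\tau_i}$ shows that the sum differs from $\sum_i\varphi(b_{\tau_i}c_{\tau_i})a_{\tau_i}d_{\tau_i}\Delta s_i$, a genuine Riemann sum for the claimed integral, by a martingale-type remainder $R(\Delta):=\sum_i a_{\tau_i}\bigl(\Delta W_i\,b_{\tau_i}c_{\tau_i}\,\Delta W_i-\varphi(b_{\tau_i}c_{\tau_i})\Delta s_i\bigr)d_{\tau_i}$.

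The main obstacle, and the step I expect to require the most care, is to prove that $\lVert R(\Delta)\rVert\to 0$. The summands are not centered free stochastic increments themselves but mean-zero bi-linear expressions in $\Delta W_i$; the natural route is to decompose each summand into two free stochastic integrals (using the resolution of $\Delta W_i\,X\,\Delta W_i-\varphi(X)\Delta s_i$ into iterated Wigner integrals of order two) and then apply the free Burkholder--Gundy inequality of \cite[Theorem~3.2.1]{biane1998stochastic} to bound the operator norm of each piece by a quantity that is $O(\sqrt{\max_i\Delta s_i})$ times a bound depending on $\lVert a\rVert,\lVert b\rVert,\lVert c\rVert,\lVert d\rVert$ and $T$. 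Uniform continuity of $s\mapsto a_s,b_s,c_s,d_s$ then upgrades the Riemann-sum convergence to operator-norm convergence, and combining the two estimates yields the stated identity. The remaining three identities are obtained by analogous but simpler applications of the same machinery.
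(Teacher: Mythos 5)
The paper itself gives no proof of this lemma; after stating it the authors simply refer the reader to Biane--Speicher (\cite{biane1998stochastic}), so there is no in-paper argument to compare against.

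Your outline is essentially the standard route and the key computations you identify are correct. Interpreting each differential product as the diagonal (quadratic-covariation) Riemann sum on a common partition is the right reading, the crude operator-norm bounds $O\big((\Delta s_i)^2\big)$ and $O\big((\Delta s_i)^{3/2}\big)$ per summand (using $\lVert \Delta W_i\rVert=2\sqrt{\Delta s_i}$) do kill the first three terms, and the conditional-expectation identity $E_{\mathcal W_{s_{i-1}}}[\Delta W_i\,X\,\Delta W_i]=\varphi(X)\Delta s_i\cdot\one$ for $X$ free from $\Delta W_i$ is a correct and direct consequence of semicircularity plus freeness (it follows at once from $\varphi(sXsY)=\varphi(X)\varphi(Y)\varphi(s^2)$ for $X,Y$ in the algebra freely complemented by $s$). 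You also correctly observe that the naive termwise norm bound on the remainder $R(\Delta)$ is only $O(\Delta s_i)$ per summand, which sums to $O(T)$, so cancellation is essential.

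That said, the proposal as written is a sketch, not a proof: the operator-norm estimate $\lVert R(\Delta)\rVert\to 0$ is exactly where all the work lies, and you defer it to ``decompose into iterated Wigner integrals and apply free Burkholder--Gundy'' without carrying it out. Two things to be careful about if you do push it through. First, the decomposition of $\Delta W_i\,X\,\Delta W_i-\varphi(X)\Delta s_i$ into two first-order stochastic integrals $\int_{s_{i-1}}^{s_i}\text{d}W(u)\,X\,(W(u)-W(s_{i-1})) + \int_{s_{i-1}}^{s_i}(W(u)-W(s_{i-1}))\,X\,\text{d}W(u)$ is most naturally obtained by applying the very It\^{o} formula you are trying to justify, so you must instead derive it from the product formula for Wigner integrals (or from a direct second-chaos computation) to avoid circularity. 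Second, BDG bounds a single integral $\int a\,\text{d}W\,b$ by $C\sqrt{T}\sup\lVert a\rVert\lVert b\rVert$; to get $\lVert R(\Delta)\rVert\to 0$ you must apply it to the whole telescoping martingale sum over $i$, not term-by-term (term-by-term only recovers the $O(T)$ bound). Concatenating the per-interval integrands into one biprocess on $[0,T]$ with sup-norm $O(\sqrt{\max_i\Delta s_i})$ and then applying BDG once does give the required $O(\sqrt{\max_i\Delta s_i})$ decay; you should make that concatenation explicit. As you acknowledge, this is precisely the technical content of \cite[Theorem~3.2.1 and \S4]{biane1998stochastic}, which is the paper's (implicit) proof.
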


From here on we will restrict ourselves to coefficients that do not depend on time explicitly (as it is done in the local existence results in \cite{kargin2011free} as well) and will denote them by e.g. $a(X(t)).$


\section{Free CIR equations}\label{freecirmain}

As mentioned in the introduction it is known in the (commutative) scalar-valued case that the Feller condition \eqref{feller} 
ensures for the CIR equation 
\begin{equation*}
\text{d}x(t)=\left(a-b x(t)\right)\text{d}t+\sigma \sqrt{{x(t)}}\text{d}B(t),\quad x(0)=x_0 >0,
\end{equation*}
for $t\in[0,\infty[, $ and $a,b,\sigma>0$ 
a global positive solution, where $(B(t))_{t\geq0}$ is a (classical) Brownian motion.  

Therefore it is natural to ask, if the above Feller condition guarantees global existence of a positive solution for a free SDE
\begin{equation}\label{E:Free01}
\text{d}X(t)=\left(a(t)-b X(t)\right)\text{d}t+\frac{\sigma(t)}{2} \sqrt{X(t)}\text{d}W(t) + \text{d}W(t) \sqrt{X(t)} \frac{\sigma(t)}{2} ,\quad X(0)=X_0\in\mathcal{A}_+,
\end{equation}
for $a(t),\sigma(t)\in\mathcal{A}_+ , b>0$ and $t\in[0,\infty[,$ where $(W(t))_{t\geq0}$ is a free Brownian motion. From here on we will refer to the (classical) Brownian motion by $(B(t))_{t\geq0}$ and to the free Brownian motion by $(W(t))_{t\geq0}.$ Note that in this paper the above differential notation is always to be understood as an integral equation (see \cite{kargin2011free}). Furthermore, mark that the particular form of \eqref{E:Free01} ensures that $X(t)$ keeps self-adjoint for all $t\in[0,\infty[.$

We therefore state our main theorem:
\begin{theorem}\label{T:CIRfreeBM2}
Let $X_0\in\mathcal{A}_{+}$ be given. Let $a,\sigma:[0,\infty[\rightarrow\left<X_0,\text{id}\right>_+$ be continuous and $b>0$ be a constant such that \eqref{feller} holds.
Then the free SDE
\begin{equation*}
\text{d}X(t)=\left(a(t)-b X(t)\right)\text{d}t+\frac{\sigma(t)}{2} \sqrt{X(t)}\text{d}W(t) + \text{d}W(t) \sqrt{X(t)} \frac{\sigma(t)}{2} ,\quad X(0)=X_0\in\mathcal{A}_+,
\end{equation*}
for $t\in[0,\infty[, $ has a global solution $X\in C([0,\infty[,\mathcal A_+)$.
\end{theorem}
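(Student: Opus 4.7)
The plan is to combine a regularization argument with a Feller-condition-driven lower bound on the spectrum, and then pass to the limit. Since the square root fails to be operator Lipschitz at zero, Kargin's local existence result does not apply to \eqref{E:Free01} directly, so I would first smooth out the singularity. For each $\varepsilon>0$ consider the regularized SDE
\begin{equation*}
\text{d}X^{\varepsilon}(t)=\left(a(t)-b X^{\varepsilon}(t)\right)\text{d}t+\tfrac{\sigma(t)}{2}\sqrt{X^{\varepsilon}(t)+\varepsilon}\,\text{d}W(t)+\text{d}W(t)\sqrt{X^{\varepsilon}(t)+\varepsilon}\,\tfrac{\sigma(t)}{2},\quad X^{\varepsilon}(0)=X_0.
\end{equation*}
The map $x\mapsto\sqrt{x+\varepsilon}$ is smooth with a bounded derivative on $[0,\infty[$, hence locally operator Lipschitz in the sense recalled in the introduction, so Kargin's theorem delivers a local solution $X^{\varepsilon}$. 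A free It\^o calculation on $\varphi(X^{\varepsilon}(t)^{2})$, using Lemma \ref{L:freeIto} and the continuity/boundedness of $a,\sigma$ on compacts, followed by Gronwall's inequality, gives an upper bound on $\lVert X^{\varepsilon}(t)\rVert$ that is uniform in $\varepsilon$ on every $[0,T]$; this precludes blow-up and extends $X^{\varepsilon}$ globally.

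The central step is to establish a strictly positive lower bound on the spectrum that survives the limit $\varepsilon\to 0$. For $\delta>0$ and $p\geq 1$ introduce the ``negative moments''
\begin{equation*}
M_{p}(t):=\varphi\bigl((X^{\varepsilon}(t)+\delta)^{-p}\bigr).
\end{equation*}
Applying Lemma \ref{L:freeIto} to $f(X)=(X+\delta)^{-p}$ produces, from the drift, a contribution proportional to $\varphi\bigl(a(t)(X^{\varepsilon}+\delta)^{-p-1}\bigr)$, and, from the quadratic variation of the two symmetric stochastic terms, a contribution proportional to $\varphi\bigl(\sigma(t)^{2}(X^{\varepsilon}+\varepsilon)(X^{\varepsilon}+\delta)^{-p-1}\bigr)$ (the trace factor $\varphi(bc)$ in the free It\^o multiplication table is where $\sigma(t)^{2}$ is assembled). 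Because $a(t),\sigma(t)\in\langle X_{0},\text{id}\rangle_{+}$ commute with the spectral resolution used for $X_{0}$ at $t=0$ and because $\varphi$ is tracial, these terms can be combined so that the coefficient appearing is precisely $2a(t)-\sigma(t)^{2}$, which is non-negative by the Feller condition \eqref{feller}. Hence $M_{p}(t)$ stays bounded as $\delta\to 0$, yielding a lower bound $X^{\varepsilon}(t)\geq c(T)>0$ on $[0,T]$ independent of $\varepsilon$.

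With uniform two-sided spectral control in hand, $x\mapsto\sqrt{x}$ is operator Lipschitz on the joint range of the $X^{\varepsilon}$, so a Cauchy argument applied to the difference $X^{\varepsilon}-X^{\varepsilon'}$, again via free It\^o plus Gronwall, shows that $X^{\varepsilon}$ converges in $C([0,T],\mathcal{A})$. The limit $X\in C([0,\infty[,\mathcal{A}_{+})$ inherits the lower bound and solves \eqref{E:Free01}.

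The hard part will be the lower bound in the second paragraph: in the classical scalar setting this is Feller's explicit one-dimensional boundary computation, but here the non-commutativity of $\sqrt{X^{\varepsilon}}$, $X^{\varepsilon}$, and the Brownian increments means one cannot simply imitate the classical proof. The trace in the free It\^o correction, together with the hypothesis that $a(t),\sigma(t)$ lie in the commutative subalgebra $\langle X_{0},\text{id}\rangle_{+}$, is what makes the off-diagonal couplings collapse and the Feller combination $2a(t)-\sigma(t)^{2}$ emerge cleanly; verifying this collapse rigorously, and choosing the family $M_{p}$ precisely enough that its differential inequality closes, is the technical heart of the proof.
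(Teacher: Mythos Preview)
Your strategy is quite different from the paper's, and it has genuine gaps at both the upper- and lower-bound steps.

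For the upper bound, a Gronwall estimate on $\varphi(X^{\varepsilon}(t)^{2})$ controls only $\lVert X^{\varepsilon}(t)\rVert_{2}$, not the operator norm $\lVert X^{\varepsilon}(t)\rVert$. Kargin's continuation criterion is stated for the operator norm, so a bound on a single trace moment does not preclude blow-up; you would need control of all moments $\varphi(X^{\varepsilon}(t)^{2k})$ with the right growth in $k$, and that is not what your argument produces. The more serious gap is the lower bound. Your hope is that the free It\^o correction for $\varphi\bigl((X^{\varepsilon}+\delta)^{-p}\bigr)$ assembles into something proportional to $\varphi\bigl((2a(t)-\sigma(t)^{2})(X^{\varepsilon}+\delta)^{-p-1}\bigr)$. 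But for $t>0$ the process $X^{\varepsilon}(t)$ lives in the algebra generated by $X_{0}$ \emph{and} $\{W(s):s\le t\}$, so $\sigma(t)\in\langle X_{0},\text{id}\rangle$ no longer commutes with $X^{\varepsilon}(t)$ or with $\sqrt{X^{\varepsilon}(t)+\varepsilon}$. When you expand the quadratic variation of $A\,\text{d}W+\text{d}W\,A$ with $A=\tfrac{\sigma}{2}\sqrt{X^{\varepsilon}+\varepsilon}$ using Lemma~\ref{L:freeIto}, the four cross terms produce expressions such as $\varphi(A)\,A$, $A^{2}$ and $\varphi(A^{2})$, and after inserting the free second difference quotient of $(x+\delta)^{-p}$ you obtain products like $\varphi\bigl((X^{\varepsilon}+\delta)^{-j}A\bigr)\varphi\bigl((X^{\varepsilon}+\delta)^{-k}A\bigr)$ rather than a single trace in which $2a-\sigma^{2}$ can be factored out. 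The ``collapse'' you invoke would require $\sigma(t)$ and $X^{\varepsilon}(t)$ to commute, which they do not. Even granting a bound on $M_{p}$ for each fixed $p$, that alone does not give a uniform spectral gap $X^{\varepsilon}(t)\ge c(T)$; you would again need control of $M_{p}^{1/p}$ uniformly in $p$.

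The paper circumvents all of this by never attacking the free CIR equation head-on. It introduces an auxiliary ``square-root'' process $V(t)$ whose stochastic part is \emph{additive}, $\tfrac{\sigma}{4}\,\text{d}W+\text{d}W\,\tfrac{\sigma}{4}$, so the diffusion coefficient does not involve $V$ at all and the operator-Lipschitz issue disappears from the noise. The positivity is imported from the classical (commutative) theory: one builds a companion process $\overline{V}$ driven by an ordinary Brownian motion, for which Feller's result applies pointwise on the spectrum of $V_{0}$, and then proves an $L_{2}$-isometry $\lVert V(t)\rVert_{2}=\lVert\overline{V}(t)\rVert_{2}$ by matching the ODEs satisfied by the traces along a partition. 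A separate ``reference process'' argument (Proposition~\ref{P:P1}) converts this trace information into operator-norm boundedness and invertibility of $V(T)$. Only at the very end is the free It\^o formula used, to pass from $V$ to $X=V^{2}$. The hypothesis $a,\sigma\in\langle X_{0},\text{id}\rangle_{+}$ is exploited not to make $\sigma$ commute with the full solution, but to keep the deterministic pieces of the partition scheme inside a commutative subalgebra where the trace ODEs for $V$ and $\overline{V}$ coincide.
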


Again, note that this equation is to be understood as the integral equation
\begin{equation*}
X(t)=\int_0^t \left(a(s)-b X(s)\right)\text{d}s+ \int_0^t \frac{\sigma(s)}{2} \sqrt{X(t)}\text{d}W(s) + \int_0^t  \text{d}W(s) \sqrt{X(s)} \frac{\sigma(s)}{2}.
\end{equation*}

As mentioned before, due to the non-commutativity of $X(t),$ the classical free CIR equation 
\begin{equation}\label{E:Free02}
\text{d}X(t)=\left(a(t)-b X(t)\right)\text{d}t+\sqrt[\leftroot{0}\uproot{1}4]{X(t)}\sqrt{\sigma(t)}\text{d}W(t)\sqrt{\sigma(t)}\sqrt[\leftroot{0}\uproot{1}4]{X(t)},\quad X(0)=X_0\in\mathcal{A}_+,
\end{equation}
for $a(t),\sigma(t)\in\mathcal{A}_+ , b>0$ and $t\in[0,\infty[,$ may differ from the non-classical free CIR equation \eqref{E:Free01} but is shown to have a global positive solution, as well, which will be formulated as a corollary at the end of the proof of the main theorem.

We start showing existence and uniqueness of these free SDEs by introducing a SDE, referred to as ``square-root process'', with a simple additive Brownian motion term of the form
$$\text{d}{\overline{U}}(t)=\left(\frac{1}{2}\left(a-\frac{\sigma^2}{4}\right){\overline{U}}^{-1}(t)-\frac{b}{2} {\overline{U}}(t)\right)\text{d}t+\frac{\sigma}{2}\text{d}B(t),\quad \overline{U}(0)=U_0>0,$$  
for $t\in[0,\infty[,{}a,b,\sigma>0$ and $\varphi$ transforming it into a free SDE (driven by a free Brownian motion). Note that the name ``square-root process'' is chosen due to the fact that squaring this process results in the CIR equation in the classical scalar-valued case.

Since the existence of a positive solution to a scalar-valued SDE of the form above and the classical CIR equation are equivalent by the classical It\^{o} lemma, we know that such an equation has a positive solution as long as the Feller condition is satisfied. We will transform this connection first into the setting of (commutative) function spaces (see Theorem~\ref{teozwei}), followed up by the general (non-commutative) von Neumann algebra-valued case (see Lemma~\ref{teodrei}). Note that the driving process is still a classical Brownian motion. 

Since we want to deduce the existence of a positive solution to the free CIR equation(s) we need to rearrange the square-root process in such a way that self-adjointness is ensured. We will furthermore distinguish between a classical Brownian motion- and a free Brownian motion-driven operator-valued version. Note that in order to establish the isometry in the proof of the main theorem (Theorem~\ref{teofunf}) those version have to differ by more than just the Brownian motions, due to their different properties. We therefore define the following two equations:
\begin{eqnarray}\label{vquerupdate}
\text{d}{\overline{V}}(t)&=&\left(\frac{1}{2}\left(a-\frac{\varphi(\sigma)\sigma + \sigma^2}{8}\right){\overline{V}}^{-1}(t)  - \frac{b}{2} \overline{V}(t)\right) \text{d}t+\frac{1}{2}\sqrt{\frac{\varphi(\sigma^2)}{2} + \frac{\varphi(\sigma)^2}{2}}\text{d}B(t),\nonumber\\ &\quad& \overline{V}(0)=\overline{V}_0 = U_0 >0.
\end{eqnarray}
and
\begin{eqnarray}\label{vupdate}
\text{d}V(t)&=&\left(\left(a-\frac{\varphi(\sigma)\sigma + \sigma^2}{8}\right)\frac{1}{4}V^{-1}(t) + \frac{1}{4}V^{-1}(t) \left(a-\frac{\varphi(\sigma)\sigma + \sigma^2}{8}\right) - \frac{b}{2} \overline{V}(t)\right)\text{d}t+\frac{\sigma}{4}\text{d}W(t)\nonumber\\&+& \text{d}W(t) \frac{\sigma}{4},\quad V(0)=V_0=\overline{V}_0 = U_0 >0.
\end{eqnarray}
Note that both versions start at the same point $\overline{V}_0 = V_0.$ Furthermore we want to point out the fact that since the algebra $\left<V_0 , \text{id}\right>$ is commutative, multiplication in \eqref{vquerupdate} is commutative, which is not the case for equation \eqref{vupdate}.
 
The final and most elaborate part (Theorem~\ref{teofunf}) consists of changing the driving process to a free Brownian motion. We will do this by showing that the solutions to the von Neumann algebra-valued SDE driven by a classical and the one driven by a free Brownian motion are $L_2$-isometric for $t\geq0.$ 
Starting with an element $V_0 = \overline{V}_{0} >0$ in the von Neumann algebra $\mathcal{A},$ the main result in \cite{kargin2011free} ensures the local existence of a solution $V(t)$ on an interval $[0,T[$ for a $T>0.$ For a partition $0=t_0 < \cdots < t_n = T$ of the common existence interval we show the isometry for each interval $[t_{i}, t_{i+1}[$ by an induction argument. A crucial part in establishing the isometry are the similar properties of the free and the classical Brownian motions. In particular the fact that the first and the second moments, respectively, of their corresponding increments coincide, is used to show the isometry at $t_i.$ Furthermore, the specific form of these equations permits coinciding ordinary differential equations (ODEs) for the corresponding traces, which, for isometric starting values $V(t_i)$ and $\overline{V}(t_i)$, have a unique solution. This construction establishes the isometry for the interval $[t_{i},t_{i+1}[.$ The final part in the proof shows the invertability of $V(T),$ making the solution global.
Finally the free It\^{o} lemma gives us the existence of a global solution to our free CIR equation (under Feller). Note that this proof can be mimicked for similar free SDEs but we restrain from including such results since they do not fit the scope of this paper.\\


Given a positive $U_0$ we can select a special probability space to transfer the SDE into a usual vector-valued SDE. Using functional calculus resp. the spectral theorem we have an isometric homomorphism
\begin{equation*}
T:\mathcal{B}(\sigma(U_0))\longrightarrow \left< U_0 , \text{id}\right>,
\end{equation*}
where $\left< U_0,\text{id}\right>$ is the von Neumann algebra generated by $U_0$ (and the identity) and $\mathcal{B}(\sigma(U_0))$ is the function space of bounded, measurable functions on $\sigma(U_0), $ the spectrum of $U_0.$  
If $\varphi$ is a unital, faithful trace, then consider $\mathbb E_{\mathbb P_\varphi}=T^*(\varphi)$ ($\mathbb{P}_{\varphi}$ the spectral measure of $U_0$) with the identity
$$\mathbb E_{\mathbb P_\varphi}(g)=\int_{\sigma(U_0)}g\text{d}\mathbb P_{\varphi}=\varphi( T(g) )\text{ for all }g\in \mathcal{B}(\sigma(U_0)).$$ 
Note the implicit definition of the probability measure $\mathbb{P}_{\varphi}$ obtained by choosing $g$ as the identity on $L$ for $L\subseteq \sigma(U_0).$
We may omit the subscript ``$\varphi$'' 
when the context is clear.

%


We fix a filtered probability space $(\Omega,\mathcal{F},(\mathcal{F}_t)_{t\geq0},\mathbb{P})$ and start with a vector-valued version for the existence of a local solution of a classical SDE. For this let $(B(t))_{t\geq0}$ be a classical Brownian motion on $(\Omega,\mathcal{F},(\mathcal{F}_t)_{t\geq0},\mathbb{P}).$
First, we state a result which seems folklore, but is not cited explicitly.

\begin{proposition}\label{teoeins}
Let $E$ be a Banach space. Let $a:E \rightarrow E$ and $b_i : E \rightarrow E$ for $i=1,\dots, m$ be continuous and locally Lipschitz. For $U_0\in E $ 
there exists a $T>0$ and a unique continuous solution $U:[0,T[\rightarrow E,$ 
such that $U(0)=U_0$, $U\in C([0,T[, E)$ and for all $t\in[0,T[$ 
\begin{equation*}
U(t) = U_0 + \int_0^t a(U(s))\text{d}s+\displaystyle\sum_{i=1}^m \int_0^t b_i (U(s))\text{d}B(s).
\end{equation*}
\end{proposition}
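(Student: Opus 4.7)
The plan is the standard Picard–Lindel\"of strategy for SDEs, adapted to the Banach-space-valued setting, with a truncation step to absorb the merely local Lipschitz hypothesis. Fix $R>0$ and let $L_R>0$ be a common Lipschitz constant for $a$ and each $b_i$ on $\overline B(U_0,R)\subset E$. Using the Lipschitz retraction $\rho_R:E\to\overline B(U_0,R)$, $\rho_R(u):=U_0+R(u-U_0)/\max(R,\|u-U_0\|)$, I would define globally Lipschitz and bounded truncations $a^{(R)}:=a\circ\rho_R$ and $b_i^{(R)}:=b_i\circ\rho_R$ sharing the Lipschitz constant $L_R$.

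Next, for $T>0$ to be chosen, equip the space $X_T$ of $(\mathcal F_t)$-adapted processes $U:[0,T]\times\Omega\to E$ with the norm $\|U\|_{X_T}:=(\sup_{t\le T}\mathbb E\|U(t)\|^2)^{1/2}$ and consider the Picard operator
\[
\Phi(U)(t):=U_0+\int_0^t a^{(R)}(U(s))\,\text{d}s+\sum_{i=1}^m\int_0^t b_i^{(R)}(U(s))\,\text{d}B(s).
\]
Cauchy–Schwarz on the drift and the It\^o isometry on each diffusion term give an estimate of the form
\[
\|\Phi(U)-\Phi(V)\|_{X_T}^2\le C_m L_R^2(T^2+T)\,\|U-V\|_{X_T}^2,
\]
so for $T=T(R)$ small enough $\Phi$ is a strict contraction with a unique fixed point $U^{(R)}$, which admits a sample-path continuous modification by a Kolmogorov/BDG-type argument on the stochastic integrals.

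To recover a solution of the untruncated equation I would set $\tau_R:=\inf\{t\in[0,T]:\|U^{(R)}(t)-U_0\|\ge R\}$, which is strictly positive almost surely since $U^{(R)}(0)=U_0$. On $[0,\tau_R)$ one has $\rho_R(U^{(R)})=U^{(R)}$, so $U^{(R)}$ actually satisfies the original SDE there; uniqueness on any subinterval in which two candidate solutions both remain inside a common ball follows from the same Lipschitz estimate combined with Grönwall's inequality, and the local solutions obtained for different $R$ patch consistently into a maximal one.

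The main obstacle I anticipate is justifying the It\^o isometry in a general Banach space $E$: scalar-driven stochastic integrals into abstract Banach spaces require additional structure (UMD spaces, or at minimum a Hilbert structure) to satisfy the usual second-moment estimate. In the sequel $E$ will in fact be the commutative von Neumann algebra $\langle U_0,\mathrm{id}\rangle$, isometrically isomorphic via the functional calculus $T$ to $\mathcal B(\sigma(U_0))$, a subspace of $L^\infty(\sigma(U_0),\mathbb P_\varphi)$. The claim then reduces, pointwise in $\lambda\in\sigma(U_0)$, to a scalar SDE to which the classical It\^o theory applies verbatim, so the functional-calculus reduction is the concrete route by which I would make the fixed-point argument above rigorous in the setting actually used in the remainder of the paper.
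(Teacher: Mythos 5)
Your high-level plan --- Picard iteration plus a truncation/stopping-time device to absorb the merely local Lipschitz hypothesis --- matches the paper's, but the way you try to make the contraction estimate rigorous diverges from it, and the divergence leaves a gap. The paper scalarizes: it applies a Hahn--Banach functional $e^{*}\in E^{*}$ to turn the $E$-valued stochastic integral into a real-valued one, and then uses the classical Burkholder--Davis--Gundy inequality to run a sup-norm contraction as in \cite[Theorem~3.1]{kargin2011free}. You instead set up the fixed-point argument in an $L^{2}$-in-probability norm and invoke the It\^{o} isometry directly, which you yourself flag as the weak point: the second-moment identity for $\int_{0}^{t}f(s)\,\text{d}B(s)$ with $E$-valued $f$ does not hold, and the integral itself need not be well behaved, in an arbitrary Banach space.

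The repair you propose does not close this gap for the proposition as stated. Passing through the functional-calculus isomorphism $T:\mathcal{B}(\sigma(U_{0}))\to\langle U_{0},\text{id}\rangle$ does not make the abstract $L^{2}$ estimate available --- $\mathcal{B}(\sigma(U_{0}))$ (and $L^{\infty}$) is neither Hilbertian nor UMD --- so what you are actually doing is replacing the Banach-space-valued SDE by a family of scalar SDEs indexed by $\lambda\in\sigma(U_{0})$. That is a legitimate strategy, but it proves the claim only for $E=\langle U_{0},\text{id}\rangle$, not for a general Banach space $E$, and it is essentially the device the paper reserves for the next step, Theorem~\ref{teozwei}, where positivity is extracted from the scalar Feller condition pointwise in $k\in K$; Proposition~\ref{teoeins} is there precisely to supply local existence \emph{before} that pointwise reduction is made. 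To complete your argument either adopt the Hahn--Banach scalarization so that BDG can be applied in the real line and the generality is retained, or explicitly downgrade the proposition to the case $E=\langle U_{0},\text{id}\rangle$ and fold it into the pointwise-in-$\lambda$ argument you sketch, accepting that you are then proving a narrower statement than the one given.
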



\begin{proof}
The proof follows a standard argument via Picard approximations and the Banach fixed-point theorem. Applying the Hahn-Banach theorem we use an element $e^{*}\in E^{*}$ in the dual of $E$ to transform our approximations into the real setting in order to make use of the (classical) Burkholder-Gundy inequality. The details are analogous to the proof of \cite[Theorem~3.1.]{kargin2011free}. $\hfill\qed$
\end{proof}

Having established the existence of a local solution we will show that the classical Feller condition ensures a positive solution in the case of the commutative von Neumann algebra $C(K)$ of continuous functions of a compact Hausdorff space $K.$ We further denote the positive cone by 
$$C(K)_+ = \{f\in C(K)\;|\;f(z)>0,\text{ for all }z\in K\}.$$ 

We note that using the It\^{o} formula, under the Feller condition the scalar-valued square-root process enjoys a global solution provided the initial condition is positive. We state a vector-valued extension.

\begin{theorem}\label{teozwei}
Let $K\subset ]0,\infty[$ be compact. Let $a,b,\sigma:[0,\infty[\rightarrow C(K)_+$ be continuous, such that \eqref{feller} holds and let $\hat{U}_0\in C(K)_+ .$ Then the SDE 
\begin{equation}\label{Uneu}
\text{d}\hat{U}(t)=\left(\frac{1}{2}\left(a(t)-\frac{\sigma^2 (t)}{4}\right)\hat{U}^{-1} (t) - \frac{b(t)}{2}\hat{U}(t)\right)\text{d}t+\frac{\sigma(t)}{2}\one_K \text{d}B(t),\quad \hat{U}(0)=\hat{U}_0 ,
\end{equation} 
for $t\in[0,\infty[,$ has a global solution $U\in C([0, \infty[,L_2(\mathbb P_\varphi, C(K)_{+})).$ $\one_K$ denotes the indicator function on $K$ and $L_2(\mathbb P_\varphi, C(K)_{+})$ denotes the $C(K)_{+}$-valued square-integrable functions w.r.t. to $\mathbb{P}_{\varphi}$.
\end{theorem}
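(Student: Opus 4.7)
The plan is to reduce the problem to the classical scalar CIR equation by pointwise evaluation, use Feller's classical positivity result parametrically in $z\in K$, and then upgrade the pointwise picture to a $C(K)$-valued statement using compactness of $K$.

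First, I would obtain local existence by applying Proposition~\ref{teoeins} with $E=C(K)$. The drift involves $\hat U^{-1}$, which is only defined on the open cone $\mathcal{O}_\eta:=\{f\in C(K):\min_K f>\eta\}$, so I would truncate by replacing $\hat U^{-1}$ by a globally Lipschitz extension of its restriction to $\mathcal{O}_\eta$. On $\mathcal{O}_\eta$ the coefficients of \eqref{Uneu} are continuous and locally Lipschitz in the $C(K)$-norm (uniformly so, thanks to the compactness of $K$ and the continuity of $a,b,\sigma$), so Proposition~\ref{teoeins} yields a local continuous solution $\hat U^\eta$ up to the first exit time $\tau_\eta$ from $\mathcal{O}_\eta$.

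Next, I would show that $\tau_\eta\to\infty$ as $\eta\to 0$ by reducing to the scalar CIR equation. For each fixed $z\in K$, the pointwise process $u(t,z):=\hat U^\eta(t)(z)$ solves the one-dimensional SDE obtained by evaluating the coefficients at $z$, driven by the single scalar Brownian motion $B$. Applying the classical Itô formula to $X(t,z):=u(t,z)^2$ cancels the singular $\hat U^{-1}$-term against the Itô correction $\tfrac{\sigma^2(t)(z)}{4}\,dt$ and produces
\begin{equation*}
dX(t,z)=\bigl(a(t)(z)-b(t)(z)X(t,z)\bigr)dt+\sigma(t)(z)\sqrt{X(t,z)}\,dB(t),\qquad X(0,z)=\hat U_0(z)^2>0,
\end{equation*}
which is the classical CIR equation at parameter $z$. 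Since the Feller condition $2a(t)(z)\ge\sigma^2(t)(z)$ holds for every $z\in K$, classical CIR theory (Feller) gives that $X(\cdot,z)$ is strictly positive for all $t\ge0$ almost surely, and consequently so is $u(\cdot,z)=\sqrt{X(\cdot,z)}$.

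The remaining task is to control this positivity and regularity uniformly in $z\in K$. Because $K$ is compact and $a,b,\sigma,\hat U_0$ are continuous with strictly positive infima, the CIR parameters and initial conditions admit uniform bounds, and the standard parameter-continuity theory for the CIR flow (via moment estimates and Kolmogorov's continuity criterion applied to $(t,z)\mapsto X(t,z)$) yields a version that is jointly continuous on $[0,\infty[\times K$ and almost surely strictly positive. Taking square roots gives a process $\hat U(t):=\sqrt{X(t,\cdot)}\in C(K)_+$, continuous in $t$ in the $C(K)$-norm; the usual moment estimates for CIR, uniform in $z\in K$, then yield $\hat U\in C([0,\infty[,L_2(\mathbb P_\varphi,C(K)_+))$. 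Finally, applying the scalar Itô formula in reverse to $\hat U(t)(z)=\sqrt{X(t,z)}$ shows that $\hat U$ solves \eqref{Uneu}, which together with the uniform positivity proves $\tau_\eta\to\infty$ and gives global existence.

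The main obstacle is the upgrade from pointwise-in-$z$ Feller positivity to a genuinely $C(K)$-valued statement: the diffusion coefficient $\sqrt{X}$ is only Hölder, so continuous dependence on the parameter $z$ is not a direct Lipschitz estimate but must be handled by the Yamada--Watanabe style arguments together with Kolmogorov's continuity theorem in the $z$-variable, leveraging the compactness of $K$ to turn pointwise a.s.\ positivity into a uniform strictly positive lower bound on compact time intervals.
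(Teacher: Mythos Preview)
Your approach is correct and shares the paper's core idea: evaluate pointwise at each $z\in K$ to reduce to a one-parameter family of scalar square-root/CIR equations driven by the \emph{same} Brownian motion $B$, and invoke the classical Feller result to secure positivity coordinate by coordinate. Where you diverge is in the ``upgrade'' from pointwise positivity to a $C(K)_+$-valued statement. The paper handles this much more cheaply: it fixes a countable dense subset $\tilde K\subset K$, takes the union $N=\bigcup_{k\in\tilde K}N_k$ of the individual null sets, and on $\Omega\setminus N$ gets simultaneous positivity for all $k\in\tilde K$; continuity in $k$ (which comes for free from the $C(K)$-valued local solution furnished by Proposition~\ref{teoeins}) then propagates positivity to all of $K$.

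Your route through Yamada--Watanabe type estimates and a Kolmogorov continuity argument in the parameter $z$ is heavier but has the virtue of being self-contained about regularity in $z$: you are effectively \emph{rebuilding} the joint continuity rather than inheriting it from the local $C(K)$-valued solution. This is unnecessary once you have already invoked Proposition~\ref{teoeins}, since the local solution is already $C(K)$-valued and hence continuous in $z$ up to the exit time; the only thing left to rule out is that $\min_K\hat U(t)$ reaches zero, and for that the countable-dense-subset argument suffices. On the other hand, your argument would still go through even without the $C(K)$-valued local existence step, so it buys some independence from Proposition~\ref{teoeins} at the cost of the Yamada--Watanabe machinery.
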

\begin{proof}
Consider a classical Brownian motion $(B(t))_{t\geq0}$ on the probability space $(\Omega,\mathcal{F},\mathbb{P}_{\varphi}).$ Then using point mass measures $\delta_k$ for $k\in K$ and the classical Feller condition prove the global existence of 

\begin{equation*}
\text{d}\hat{U}_{k}(t)=\left(\frac{1}{2}\left(a_{k}(t)-\frac{\sigma_k^2 (t)}{4}\right)\hat{U}_{k}^{-1}(t)-\frac{b_{k}(t)}{2} \hat{U}_k (t)\right)\text{d}t+\frac{\sigma_{k}(t)}{2}\text{d}B(t),\quad\hat{U}_{k}(0)=\hat{U}_{0,k}>0,
\end{equation*}
for $t\in[0,\infty[. $

Using a countable dense subset $\tilde{K}\subset K$ and the point mass functional we show that the paths stay positive except at a $\mathbb P$-zero set $N:=\bigcup_{k\in\tilde{K}} N_k$. So for all $\omega\in\Omega\setminus N$ the paths stay positive on $[0,\infty[$ for all $k.$ $\hfill\qed$
\end{proof}



%
%
\begin{corollary}\label{C:Cir}
Let $K,a,b,\sigma$ be given as in Theorem~\ref{teozwei}. Then the generalized CIR equation 
$$\text{d} \hat{X}(t) = \left(a(t) - b(t) \hat{X}(t)\right)\text{d}t + \sigma(t) \sqrt{\hat{X}(t)}\text{d}B(t),\quad \hat{X}(0) = \hat{X}_0\in C(K)_+,$$ 
for $t\in[0, \infty[, $ has a global solution.
\end{corollary}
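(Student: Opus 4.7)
The plan is to derive the solution of the generalized CIR equation as the square of the solution of the square-root SDE \eqref{Uneu}, i.e.\ to set $\hat X(t) := \hat U(t)^2$, where $\hat U$ is the global $C(K)_+$-valued solution whose existence is granted by Theorem~\ref{teozwei} applied to the coefficients $a, b, \sigma$ (noting that Feller \eqref{feller} is assumed). Since $C(K)$ is commutative and $\hat U(t,\omega,k)>0$ for all $k\in K$ off a $\mathbb{P}$-null set $N$, the pointwise square $\hat X(t,\omega,k) = \hat U(t,\omega,k)^2$ lies in $C(K)_+$ and is continuous in $t$.

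For each fixed $k\in K$, the scalar process $t\mapsto \hat U_k(t)$ satisfies the scalar square-root SDE with coefficients $a_k,b_k,\sigma_k$ (this is exactly what the proof of Theorem~\ref{teozwei} constructs). I would then apply the classical scalar It\^o formula with $f(u)=u^2$ to $\hat U_k$. A direct computation gives
\begin{equation*}
\text{d}(\hat U_k^2) = 2\hat U_k\,\text{d}\hat U_k + (\text{d}\hat U_k)^2
= \left(a_k - \frac{\sigma_k^2}{4} - b_k \hat U_k^2\right)\text{d}t + \sigma_k \hat U_k\,\text{d}B + \frac{\sigma_k^2}{4}\text{d}t,
\end{equation*}
so the quadratic variation term exactly cancels the $-\sigma_k^2/4$ drift and leaves $\text{d}\hat X_k = (a_k - b_k \hat X_k)\,\text{d}t + \sigma_k\sqrt{\hat X_k}\,\text{d}B$, which is the CIR equation evaluated at $k$, with $\sqrt{\hat X_k}=\hat U_k$ well-defined because $\hat U_k>0$.

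To package this into a $C(K)$-valued equation, I would argue as in the proof of Theorem~\ref{teozwei}: it is enough to check the scalar identity on a countable dense subset $\tilde K\subset K$ and to exploit the joint continuity of all coefficients to extend off the corresponding countable union of null sets. The stochastic integral $\int_0^t \sigma(s)\sqrt{\hat X(s)}\,\text{d}B(s)$ is well defined in $L_2(\mathbb{P}_\varphi, C(K))$ because $\sigma$ is continuous into $C(K)_+$ and $\sqrt{\hat X}=\hat U\in C([0,\infty[,L_2(\mathbb{P}_\varphi,C(K)_+))$ from Theorem~\ref{teozwei}, so the integrand inherits the required continuity and boundedness on compact time intervals.

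The only genuine subtlety is the joint-measurability step needed to promote the scalar It\^o identity (which a priori holds for each $k$ on a $k$-dependent null set) to a single identity in $C(K)_+$ valid off one common null set. This is handled precisely by the countable-dense-subset argument already used in Theorem~\ref{teozwei}; continuity of $\hat U$ in $k$ together with continuity of $a,b,\sigma$ makes the extension from $\tilde K$ to $K$ immediate, and global existence of $\hat X$ then follows from global existence of $\hat U$.\qed
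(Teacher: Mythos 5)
Your proof is correct and takes exactly the same approach as the paper: the paper also sets $\hat{X}(t) = \hat{U}(t)^2$ and invokes It\^o's lemma, simply stating this in one line while you spell out the cancellation of the $-\sigma^2/4$ drift against the quadratic-variation term and the countable-dense-subset argument for assembling the pointwise identities into a $C(K)$-valued one.
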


\begin{proof}
The proof is immediate by It\^{o}'s lemma with $\text{d}\hat{X}(t) = \text{d}\hat{U}(t)^2$. $\hfill\qed$
\end{proof}

The next step is to transform the square-root process into the setting of a non-commutative von Neumann algebra $\mathcal{A}$ but still with a classical Brownian motion as driving process.


\begin{lemma}\label{teodrei}
Let $U_0 \in \mathcal{A}_+$ and let $a,\sigma,b : [0,\infty[\rightarrow\left<U_0,\text{id}\right>_{+}$ be continuous such that $\eqref{feller}$ holds for all $t\in[0,\infty[.$ 
Then the SDE 
\begin{equation}\label{numberthree}
\text{d}\overline{U}(t) = \left(\frac{1}{2}\left(a(t) - \frac{\sigma^2 (t)}{4}\right)\overline{U}^{-1} (t) - \frac{b(t)}{2}\overline{U}(t)\right)\text{d}t + \frac{\sigma (t)}{2} \text{d}B(t),\quad \overline{U}(0) =  U_0,
\end{equation}
for $t\in[0,\infty[, $ has a global solution in $\overline{U}\in C([0,\infty[, L_2(\mathbb P_\varphi,\mathcal{A}_+)).$ Note that $\text{id}$ is the unit in the corresponding von Neumann algebra. 
\end{lemma}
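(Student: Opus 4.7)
The plan is to reduce this non-commutative statement to the commutative case already handled by Theorem~\ref{teozwei}, exploiting the fact that all coefficients and the initial datum live in the commutative subalgebra $\langle U_0,\text{id}\rangle$ and that the driving process is a scalar Brownian motion. Since $U_0\in\mathcal{A}_+$ has strictly positive spectrum, $K:=\sigma(U_0)$ is a compact subset of $(0,\infty)$, and the spectral-calculus isomorphism $T:\mathcal{B}(\sigma(U_0))\to\langle U_0,\text{id}\rangle$ described just after Proposition~\ref{teoeins} identifies $\langle U_0,\text{id}\rangle$ with bounded measurable functions on $K$ and sends the identity map $z\mapsto z$ on $K$ to $U_0$ itself.

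Next, I would pull back the coefficients: under $T^{-1}$ the continuous curves $a,b,\sigma:[0,\infty[\to\langle U_0,\text{id}\rangle_+$ correspond to continuous curves $\hat a,\hat b,\hat\sigma:[0,\infty[\to\mathcal{B}(K)_+$ (taken in $C(K)_+$ under the usual identification once one works on the spectrum). The operator-valued Feller hypothesis $2a(t)-\sigma(t)^2\in\mathcal{A}_+$ translates by the spectral theorem to the pointwise inequality $2\hat a(t,z)-\hat\sigma(t,z)^2\ge 0$ for every $z\in K$, so the classical Feller condition of Theorem~\ref{teozwei} is satisfied. Applying that theorem with initial condition $\hat U_0(z)=z$ yields a global solution
\begin{equation*}
\hat U\in C\bigl([0,\infty[,L_2(\mathbb P_\varphi,C(K)_+)\bigr)
\end{equation*}
of the transferred scalar-coefficient SDE.

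I would then define $\overline U(t):=T(\hat U(t))$. Because $T$ is an isometric $*$-homomorphism preserving positivity, $\overline U$ automatically lies in $C([0,\infty[,L_2(\mathbb P_\varphi,\mathcal A_+))$. Verifying that $\overline U$ actually satisfies the integral form of \eqref{numberthree} reduces to checking that $T$ intertwines the operations appearing in the SDE: pointwise multiplication and inversion are preserved because $T$ is a $*$-homomorphism onto a commutative algebra, the Bochner integral is preserved by boundedness and linearity of $T$, and the It\^o integral against the scalar Brownian motion $B$ is preserved by approximating the integrand by simple processes, applying $T$ termwise, and passing to the limit using the It\^o isometry together with the fact that $T$ is an $L_2$-isometry.

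The main technical obstacle is precisely this last intertwining with the stochastic integral and, in tandem, the a priori justification that any solution starting in the commutative subalgebra $\langle U_0,\text{id}\rangle$ stays there: this is what makes the reduction to the scalar-coefficient setting of Theorem~\ref{teozwei} legitimate. It follows from the uniqueness part of Proposition~\ref{teoeins} applied to $\overline U$ regarded as a process in the Banach space $\langle U_0,\text{id}\rangle$, since the drift and diffusion coefficients map this subalgebra into itself (inversion is well-defined as $\hat U(t)$ stays strictly positive by the pointwise Feller argument used in Theorem~\ref{teozwei}). Once stability under the commutative subalgebra and the intertwining of the stochastic integral with $T$ are established, the lemma follows.
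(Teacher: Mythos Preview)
Your approach is essentially identical to the paper's: both use the functional-calculus isomorphism $T:C(\sigma(U_0))\to\langle U_0,\text{id}\rangle$ to pull back the data to the commutative setting of Theorem~\ref{teozwei}, solve there, and push forward via $T$. The paper's proof is a terse three-line version of exactly what you wrote; your discussion of the intertwining with the stochastic integral and the invariance of the commutative subalgebra makes explicit points the paper leaves implicit.
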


\begin{proof}
Let $\hat{U}(t)$ be a global solution in $C(K)_+$ by Theorem~\ref{teozwei}. By the functional calculus 
we see that $T(\hat{U}(t))$ is a positive solution to \eqref{numberthree} under the Feller condition, where $T:C(K)\longrightarrow  \left<U_0,id\right>$ is the functional calculus mapping for the self-adjoint element $U_0$. 
In particular, we have $\overline{U}(t) \in \left<U_0,id\right>_{+}$ for all $t\in[0, \infty[$. 
$\hfill\qed$ 
\end{proof}
We formulate two corollaries. Firstly, applying the It\^{o}'s lemma to $\text{d}\overline{U}^2$ gives us:
\begin{corollary}\label{C:Cir2}
Let $a(t),b(t),\sigma(t)$ be given as in Lemma~\ref{teodrei}. Then the (operator-valued) generalized CIR equation
\begin{equation*}
\text{d}\overline{X}(t)=\left( a(t) -b(t) \overline{X}(t)\right)\text{d}t + \sigma(t)\sqrt{\overline{X}(t)}\text{d}B(t),\quad\overline{X}(0)=\overline{X}_0 \in\mathcal{A}_+,
\end{equation*}
for $t\in[0,\infty[, $ has a global solution $\overline{X}\in C([0,\infty[,L_2(\mathbb P_\varphi,\mathcal A_+))$.
\end{corollary}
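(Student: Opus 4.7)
The plan is to set $\overline{X}(t) := \overline{U}(t)^2$, where $\overline{U}$ is the global square-root process produced by Lemma~\ref{teodrei}, and to verify via It\^{o}'s formula that $\overline{X}$ solves the claimed CIR equation. First I would choose $U_0 := \sqrt{\overline{X}_0}$, which is well-defined and lies in $\mathcal{A}_+$ since $\overline{X}_0\in\mathcal{A}_+$ has strictly positive spectrum. Applying Lemma~\ref{teodrei} with this $U_0$ yields a global solution $\overline{U}\in C([0,\infty[, L_2(\mathbb P_\varphi,\mathcal A_+))$ to \eqref{numberthree}, and crucially $\overline{U}(t)\in \left<U_0,\text{id}\right>_+$ for all $t\geq 0$; this subalgebra is commutative, so every product appearing below commutes.

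Next I would transport the SDE back to $C(K)$ via the functional calculus isomorphism $T:C(K)\to \left<U_0,\text{id}\right>$ used in the proof of Lemma~\ref{teodrei}, where $K=\sigma(U_0)$. There the scalar It\^{o} formula applies to $u\mapsto u^2$ pointwise in $k\in K$: writing $\overline{U}(t) = T(\hat U(t))$ we get
\begin{equation*}
d\hat U(t)^2 = 2\hat U(t)\, d\hat U(t) + (d\hat U(t))^2 = 2\hat U(t)\, d\hat U(t) + \tfrac{\sigma(t)^2}{4}\, dt.
\end{equation*}
Substituting the drift and diffusion of \eqref{numberthree} and applying $T$ yields
\begin{equation*}
d\overline{X}(t) = \left(a(t)-\tfrac{\sigma(t)^2}{4}\right)dt - b(t)\overline{U}(t)^2\, dt + \sigma(t)\overline{U}(t)\, dB(t) + \tfrac{\sigma(t)^2}{4}\, dt.
\end{equation*}
The two $\sigma(t)^2/4$ terms cancel, and since $\overline{U}(t)$ is self-adjoint and positive with $\overline{U}(t)^2=\overline{X}(t)$, we have $\overline{U}(t)=\sqrt{\overline{X}(t)}$. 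Therefore
\begin{equation*}
d\overline{X}(t) = (a(t)-b(t)\overline{X}(t))\, dt + \sigma(t)\sqrt{\overline{X}(t)}\, dB(t),
\end{equation*}
which is exactly the claimed CIR equation with $\overline{X}(0)=U_0^2=\overline{X}_0$.

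Finally I would check the regularity and positivity statements. Since $\sigma(U_0)\subset ]0,\infty[$ (by $\overline{X}_0\in\mathcal{A}_+$) and $\hat U(t)$ stays strictly positive on $K$ by Theorem~\ref{teozwei}, the spectrum of $\overline{X}(t)=\overline{U}(t)^2$ is bounded away from $0$, so $\overline{X}(t)\in\mathcal{A}_+$ for all $t\geq 0$. Continuity $\overline{X}\in C([0,\infty[,L_2(\mathbb P_\varphi,\mathcal A_+))$ follows from the continuity of $\overline{U}$ together with the estimate $\lVert \overline{U}(t)^2-\overline{U}(s)^2\rVert_2 \leq (\lVert\overline{U}(t)\rVert_\infty+\lVert\overline{U}(s)\rVert_\infty)\lVert \overline{U}(t)-\overline{U}(s)\rVert_2$ and the local boundedness of $t\mapsto \lVert \overline{U}(t)\rVert_\infty$ on compact time intervals (inherited from the functional calculus picture on the compact set $K$). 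The only conceptual obstacle is the rigorous justification of It\^{o}'s formula in the operator-valued setting; this is handled cleanly by the transfer to the commutative $C(K)$-picture, where the classical scalar It\^{o} formula applies directly.
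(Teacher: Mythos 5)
Your proposal is correct and takes essentially the same approach as the paper, whose (one‑line) proof is precisely ``apply It\^{o}'s lemma to $\text{d}\overline{U}^2$'' with $\overline{U}$ from Lemma~\ref{teodrei}; you merely make explicit the functional‑calculus transfer to $C(K)$ that the paper's Lemma~\ref{teodrei} already uses, which is the natural way to justify the It\^{o} computation in the commutative operator‑valued setting.
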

Secondly, since the classical Feller condition \eqref{feller} implies the inequality $2a(t) \geq \varphi(\sigma(t))\sigma(t)$ by the Jensen's inequality and therefore $2a(t) \geq \frac{\varphi(\sigma(t))\sigma(t)+\sigma(t)^2}{2}$ for all $t\in[0, \infty[,$ we get 
\begin{corollary}\label{extendedsquareroot}
Let $V_0 \in \mathcal{A}_+$ and let $a,\sigma,b : [0,\infty[\rightarrow\left<V_0,\text{id}\right>_{+}$ be continuous such that $\eqref{feller}$ holds for all $t\in[0,\infty[.$ 
Then the SDE 
\begin{eqnarray}\label{numberthreebeta}
\text{d}{\overline{V}}(t)&=&\left(\frac{1}{2}\left(a(t)-\frac{\varphi(\sigma(t))\sigma(t) + \sigma(t)^2}{8}\right){\overline{V}}^{-1}(t)  - \frac{b(t)}{2} \overline{V}(t) \right)\text{d}t+\frac{1}{2}\sqrt{\frac{\varphi(\sigma(t)^2)}{2} + \frac{\varphi(\sigma(t))^2}{2}}\text{d}B(t), \nonumber\\ &\quad & \overline{V}(0)=\overline{V}_0 = V_0=U_0 >0.
\end{eqnarray}
for $t\in[0,\infty[, $ has a global solution in $V\in C([0,\infty[, L_2(\mathbb P_\varphi,\mathcal{A}_+)).$
\end{corollary}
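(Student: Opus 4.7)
The plan is to follow the outline of the remark preceding the corollary: verify that the Feller condition implies the sharper operator inequality $2a(t)\geq\frac{\varphi(\sigma(t))\sigma(t)+\sigma(t)^2}{2}$, and then reduce the SDE \eqref{numberthreebeta} to the square-root SDE of Lemma~\ref{teodrei} with an appropriately chosen effective diffusion.

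First, I would establish the two operator inequalities announced in the paragraph labelled ``Secondly''. Because $a(t),\sigma(t),b(t)$ take values in the commutative subalgebra $\langle V_0,\text{id}\rangle_+$, the functional calculus identifies them with continuous positive functions on $K=\text{spec}(V_0)$, and $\varphi$ corresponds to integration against the spectral measure $\mathbb{P}_\varphi$. Combining the Feller hypothesis $2a(t)\geq\sigma(t)^2$ with Jensen's inequality $\varphi(\sigma(t))^2\leq\varphi(\sigma(t)^2)$ applied inside $C(K)$ yields the operator bound $2a(t)\geq\varphi(\sigma(t))\sigma(t)$; averaging this with the original Feller condition gives the displayed inequality.

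Second, I would set $\widetilde\sigma(t)^2:=\tfrac{1}{2}\bigl(\varphi(\sigma(t))\sigma(t)+\sigma(t)^2\bigr)$ in $\langle V_0,\text{id}\rangle_+$, so that the drift of \eqref{numberthreebeta} takes the form $\tfrac{1}{2}(a(t)-\widetilde\sigma(t)^2/4)\overline{V}(t)^{-1}-\tfrac{b(t)}{2}\overline{V}(t)$, matching the square-root SDE from Lemma~\ref{teodrei}. Taking traces, $\varphi(\widetilde\sigma(t)^2)=\tfrac{1}{2}(\varphi(\sigma(t)^2)+\varphi(\sigma(t))^2)$, which is precisely the scalar appearing under the square root in the diffusion coefficient of \eqref{numberthreebeta}; this trace-matched form is also what will later permit the replacement of the classical Brownian motion by a free Brownian motion in Theorem~\ref{T:CIRfreeBM2}. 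The modified Feller condition $2a(t)\geq\widetilde\sigma(t)^2$ obtained in the previous step now holds by construction, and Lemma~\ref{teodrei} applied with the effective data $(a,\widetilde\sigma,b)$ in the commutative algebra $\langle V_0,\text{id}\rangle$ produces a global solution $\overline{V}\in C([0,\infty[,L_2(\mathbb{P}_\varphi,\mathcal{A}_+))$ to \eqref{numberthreebeta}.

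The main obstacle is the Jensen step: the operator inequality $2a(t)\geq\varphi(\sigma(t))\sigma(t)$ is subtle because $\varphi(\sigma(t))$ is a scalar while $\sigma(t)$ is an operator, so the argument must carefully exploit both the pointwise Feller condition on $\text{spec}(V_0)$ and the state property of $\varphi$. Once this step is in place, the rest of the proof is a direct invocation of Lemma~\ref{teodrei}.
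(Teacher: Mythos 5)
The overall structure you describe (Feller plus Jensen gives the sharper operator inequality, then reduce to the square-root result Lemma~\ref{teodrei}) is the strategy the paper implicitly invokes in the sentence immediately preceding the corollary. Your first step — establishing $2a(t)\geq\frac{\varphi(\sigma(t))\sigma(t)+\sigma(t)^2}{2}$ — reproduces the paper's stated chain of inequalities faithfully.

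However, the second step has a genuine gap. You set $\widetilde\sigma(t)^2=\tfrac12\bigl(\varphi(\sigma(t))\sigma(t)+\sigma(t)^2\bigr)$ and then claim that \textquotedblleft Lemma~\ref{teodrei} applied with the effective data $(a,\widetilde\sigma,b)$ \ldots\ produces a global solution to \eqref{numberthreebeta}.\textquotedblright\ But Lemma~\ref{teodrei} with $\widetilde\sigma$ produces a solution of
\begin{equation*}
\text{d}\overline{U}(t)=\left(\frac12\Bigl(a(t)-\frac{\widetilde\sigma(t)^2}{4}\Bigr)\overline{U}^{-1}(t)-\frac{b(t)}{2}\overline{U}(t)\right)\text{d}t+\frac{\widetilde\sigma(t)}{2}\,\text{d}B(t),
\end{equation*}
whose diffusion coefficient $\widetilde\sigma(t)/2$ is an element of $\langle V_0,\text{id}\rangle_+$, i.e.\ a genuine operator. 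In contrast, the diffusion coefficient in \eqref{numberthreebeta} is the \emph{scalar} $\tfrac12\sqrt{\tfrac{\varphi(\sigma(t)^2)}{2}+\tfrac{\varphi(\sigma(t))^2}{2}}=\tfrac12\sqrt{\varphi(\widetilde\sigma(t)^2)}$. You correctly observe that these two objects have the same $\varphi$-trace squared, but they are not the same element of the algebra unless $\sigma(t)$ is already a scalar multiple of the identity. Consequently \eqref{numberthreebeta} is \emph{not} an instance of \eqref{numberthree} with a substituted $\sigma$, and Lemma~\ref{teodrei} cannot be invoked verbatim. You would instead have to repeat the pointwise argument underlying Theorem~\ref{teozwei}: apply the point mass at each $k\in\text{spec}(V_0)$, obtain the scalar square-root SDE with the constant diffusion $\tfrac12\sqrt{\varphi(\widetilde\sigma(t)^2)}$, and verify the scalar Feller condition at each $k$, which in this case reads $2a_k(t)\geq\tfrac14\bigl(\varphi(\sigma(t))\sigma_k(t)+\sigma_k(t)^2\bigr)+\tfrac14\bigl(\varphi(\sigma(t)^2)+\varphi(\sigma(t))^2\bigr)$ because the coordinate CIR drift acquires the extra constant term $\gamma^2/4$ from the scalar diffusion. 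This pointwise condition mixes the operator inequality with the scalar quantities $\varphi(\sigma^2)$ and $\varphi(\sigma)^2$ and must be checked separately; it does not drop out of the operator inequality $2a(t)\geq\widetilde\sigma(t)^2$ that you established in the first step. Until you carry out this coordinate-wise verification, the argument is incomplete.
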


In the above Corollary~\ref{extendedsquareroot} we restricted the coefficient functions to the von Neumann subalgebra $\left< V_0,\text{id}\right>_{+}.$  
We now follow up with our main Theorem~\ref{teofunf}, where we show the existence of our solution in the case of a free Brownian motion. 
For this setting we will have to restrict $b$ to the case of a scalar $b>0.$ 

\begin{remark}
\normalfont
\begin{enumerate}
\item
The last step we need for the existence of a positive solution in the context of free probability, is to change the driving process in the general von Neumann algebra-valued SDE from a classical Brownian motion to a free Brownian motion.  
Corollary~\ref{extendedsquareroot} gives the global existence of a so-called reference solution. Assuming a solution $V(t)$ of the free analogue of \eqref{numberthreebeta} on a maximal interval $[0,T[$ according to \cite{kargin2011free}, we will prove in Theorem~\ref{teofunf} the $L_2$-isometry of $\overline{V}(t)$ and $V(t)$ (resp. the $L_1$-isometry of $p  \overline{V} (t)^2 p$ and $p V (t)^2 p$ for certain projections $p$). With the help of the $L_2$-isometry and the global existence of the reference solution we may define an element $V(T) \in L_2(\varphi)$, which turns out to be an element in $\mathcal{A}_{+}$, using again the reference solution, the $L_1$- and $L_2$-isometries and the proceeding Proposition~\ref{P:P1}. Again using \cite{kargin2011free} we may now extend $V$ beyond $T$ and the global existence is proven.

\item
For a more convenient notation we may write $\varphi$ instead of $\mathbb{E}_{\mathbb{P}_{\varphi}}$ in the sequel. 
 No confusion will arise at any point. 
\item 
For Lipschitz-continuous and $\mathcal A$-valued functions $a,b,c$ we call a process
$$V(t)=V_0+\int_0^t a(V(s))ds+\int_0^tb(V(s))dW(s)c(V(s)),$$
for $t\in[0,T[, $ a free It\^{o} process. The existence is guaranteed by the main result due to \cite{kargin2011free}. 
\end{enumerate}
\end{remark}

Before stating the main result on the existence of a global solution, we introduce the notion of a reference process and subsequently formulate a result, which may be of independent interest. 

\begin{definition}
We say that a process $X\in C([0,T[,\mathcal{A}_{+})\cap C([0,T], L_1 (\varphi))$ allows reference processes, if for all $\hat{T} \in [0,T[$ there exists a process 
$\overline{X} \in C([\hat{T} , T] , \mathcal{A}_{+})$ s.t.
\begin{eqnarray}\label{einsG}
\lVert p X(t) p \rVert_1 &=& \lVert p \overline{X} (t) p \rVert_1 \text{ for }p\in \langle X(\hat{T}) , \text{id}\rangle_+, \text{ resp. }p\text{ free of }X(t) \text{ for all }t\in[\hat{T} , T]\nonumber\\
X(\hat{T}) &=& \overline{X}(\hat{T})\\
\lVert p (X (T) - X (t)) p \rVert_1 &=& \lVert p(\overline{X} (T) - \overline{X} (t))p\rVert_1\text{ for all }t\in[\hat{T},T] \text{ and }p \text{ free of } (X (T) - X (t)).\nonumber
\end{eqnarray}
\end{definition}


\begin{proposition}\label{P:P1} Let $T>0$ and let $X\in C([0,T[,\mathcal A_{+})\cap C([0,T], L_1(\varphi)_{+})$ be a free It\^{o} process, which admits reference processes. 
\begin{enumerate}
\item Then $X(T)\in \mathcal A$.
\item If $\overline{X}(t)$ is invertible for all reference processes $\overline{X}$ and $t\in [\hat{T},T],$ and if $X(t)$ is invertible for all $t\in [0,T[$ and $X\in C([0,T],\mathcal{A})$, then $X(T)$ is also invertible.
\end{enumerate} 
\color{black}
\end{proposition}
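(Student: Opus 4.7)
The plan is to transfer boundedness (Part 1) and invertibility (Part 2) from the reference process $\overline X$ to $X(T)$ via the isometry identities~\eqref{einsG}. Throughout, fix $\hat T\in[0,T)$, let $\overline X\in C([\hat T,T],\mathcal{A}_+)$ be a reference process, and set $M:=\sup_{t\in[\hat T,T]}\|\overline X(t)\|<\infty$, which is finite by norm-continuity on a compact interval.

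For Part 1 the goal is to show $\|X(T)\|\le M$. Taking $p=\text{id}$ in~\eqref{einsG} already yields the $L_1$-bound $\|X(T)\|_1=\|\overline X(T)\|_1\le M$. To upgrade this to an operator-norm bound, I would consider, for each $\lambda>M$, the spectral projection $p_\lambda:=\chi_{[\lambda,\infty)}(X(T))$ (well-defined as $X(T)$ is a positive self-adjoint operator affiliated with $\mathcal A$), and argue $\varphi(p_\lambda)=0$. The crucial step is to approximate $p_\lambda$ by projections $q_n\in\langle X(\hat T_n),\text{id}\rangle_+$ along a sequence $\hat T_n\nearrow T$, exploiting the $L_1$-continuity $X(\hat T_n)\to X(T)$. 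Property~\eqref{einsG} then gives
\begin{equation*}
\lambda\,\varphi(q_n)\le\varphi(q_nX(T)q_n)=\varphi(q_n\overline X(T)q_n)\le M\,\varphi(q_n),
\end{equation*}
so passing $q_n\to p_\lambda$ with $\varphi(q_n)\to\varphi(p_\lambda)$ and using $\lambda>M$ forces $\varphi(p_\lambda)=0$, hence $X(T)\in\mathcal A$ with $\|X(T)\|\le M$.

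For Part 2, the added invertibility of every $\overline X(t)$ on $[\hat T,T]$, combined with norm-continuity and compactness, gives a uniform lower bound $\overline X(t)\ge\delta\cdot\text{id}$ for some $\delta>0$; in particular $\overline X(T)\ge\delta$. Assuming additionally $X\in C([0,T],\mathcal A)$ (so that $X(T)\in\mathcal A$ by Part~1), for any $\delta'<\delta$ the projection $q_{\delta'}:=\chi_{[0,\delta')}(X(T))$ is a well-defined element of $\mathcal A$, and the norm-continuity now permits its approximation by $q_n\in\langle X(\hat T_n),\text{id}\rangle_+$. Property~\eqref{einsG} yields
\begin{equation*}
\delta\,\varphi(q_n)\le\varphi(q_n\overline X(T)q_n)=\varphi(q_nX(T)q_n)\to\varphi(q_{\delta'}X(T)q_{\delta'})\le\delta'\,\varphi(q_{\delta'}),
\end{equation*}
and together with $\varphi(q_n)\to\varphi(q_{\delta'})$ this gives $\delta\,\varphi(q_{\delta'})\le\delta'\,\varphi(q_{\delta'})$; since $\delta'<\delta$ this forces $\varphi(q_{\delta'})=0$ for every $\delta'<\delta$, proving $X(T)\ge\delta$ and hence invertibility.

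The hard part is justifying the spectral-projection approximation, since $X(\hat T)$ and $X(T)$ do not commute in general. In Part 1, only $L_1$-convergence $X(\hat T_n)\to X(T)$ is available, so the approximation must be argued measure-theoretically at the level of spectral distributions; in Part 2 the stronger norm-continuity supplies $X(\hat T_n)\to X(T)$ in operator norm, so spectral projections for continuity points of the spectrum converge in the strong operator topology, which is precisely what makes that case tractable. A complementary route worth exploring is to exploit the freeness clause of~\eqref{einsG}---testing against auxiliary projections free of $X(T)$ after enlarging the algebra---to pin down the spectral distribution of $X(T)$ in free-probabilistic terms before taking the spectral projection.
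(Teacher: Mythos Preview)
Your proposal has a genuine gap, and it is precisely the step you flag as ``the hard part.'' The displayed chain
\[
\lambda\,\varphi(q_n)\le\varphi(q_nX(T)q_n)=\varphi(q_n\overline X(T)q_n)\le M\,\varphi(q_n)
\]
does not hold as written. The leftmost inequality is valid only if $q_n$ is the spectral projection $p_\lambda$ of $X(T)$ itself; for an approximant $q_n\in\langle X(\hat T_n),\text{id}\rangle_+$ you only get $\lambda\,\varphi(q_n)\le\varphi(q_nX(\hat T_n)q_n)$, which is a statement about the wrong time. Worse, the middle equality uses~\eqref{einsG} for the reference process attached to $\hat T_n$, not the one attached to your fixed $\hat T$, so the constant on the right is $\|\overline X^{(n)}(T)\|$ rather than $M$, and nothing in the definition of ``admits reference processes'' makes this uniform in $n$. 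Finally, $L_1$-convergence $X(\hat T_n)\to X(T)$ does not yield convergence of spectral projections in any topology strong enough to salvage the argument; at best one controls the distribution function at continuity points, which gives $\varphi(q_n)\to\varphi(p_\lambda)$ for a.e.\ $\lambda$ but says nothing about $\varphi(q_nX(T)q_n)$.

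The paper sidesteps all of this: it never approximates $p_\lambda$. Arguing by contradiction, it takes projections $(p_n)$ with $\frac{1}{\varphi(p_n)}\|p_nX(T)p_n\|_1\ge n$ and exploits the free It\^o structure to split into two cases, according to whether infinitely many $p_n$ lie in the ``past'' algebra $\mathcal A_{\hat T}$ generated by $\{W(t):t\le\hat T\}\cup\{X_0\}$ for some fixed $\hat T$, or are free from it. In either case the relevant clause of~\eqref{einsG} applies directly to $p_n$ (no approximation needed), a \emph{single} reference process with its fixed bound $M$ suffices, and the contradiction comes from comparing $\|p_nX(T)p_n\|_1$ to $\|p_n\overline X(\tilde T)p_n\|_1$ for $\tilde T$ close to $T$ via the triangle inequality. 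Your final sentence about ``the freeness clause of~\eqref{einsG}'' is exactly the route the paper takes; that is the idea you are missing, not a complementary one.
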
  
\begin{proof}
\begin{enumerate}
\item
We suppose that $X(T)\not\in\mathcal A$. Then, since $X(T)\in L_1(\varphi)_{+}$, we find a sequence of projections $(p_n)$, such that
\begin{equation}\label{E:step22}
\frac1{\varphi(p_n)}\|p_nX (T) p_n\|_1\ge n.
 \end{equation}


We may write $X(T) = X(\hat{T})+(X(T)-X(\hat{T}))$ for all $\hat{T}\in ]0,T[.$ The element $X(T)$  is generated by the projections in the von Neumann algebras 
$\mathcal{A}_{\hat{T}}$ and $\mathcal{A}_T,$ which are generated by $\{W(t)\;|\;t\leq\hat{T}\}\cup\{X_0,\text{id}\}$ and $X(T)-X(\hat{T}),$ respectively. 
Therefore $\mathcal{A}_{\hat{T}}$ and $\mathcal{A}_T$ are free.
Consider two cases:\\

\begin{itemize}
\item [a)] 

Suppose there is a $\hat{T}\in[0,T[$ s.t. $p_n \in \mathcal{A}_{\hat{T}}$ for infinitely many $n\in\mathbb{N}$ (for simplicity we assume for all $n\in\mathbb{N}$). 
In this case we consider the reference process with a new starting value $X(\hat{T})$ instead of $X(0)$ and instead of the interval $[0,T]$ the interval $[\hat{T},T].$ 
Then for all $n\in\mathbb{N}$ the projection $p_n$ is free of increments of the free Brownian motion $W(t) - W(s)$ for $\hat{T}\leq s<t\leq T.$ 
\item [b)] For all $\hat{T} \in [0,T[$ we find $p_n \in\mathcal{A}_{\hat{T}}$ only for finitely many $n\in\mathbb{N}.$ Then we choose the infinitely many $p_n \in\mathcal{A}_T,$ for simplicity all 
$n\in\mathbb{N}.$ Here, all those $p_n$ are free of $\mathcal{A}_{\hat{T}}$ and thus this time we have that $W(t) - W(s)$ for $0\leq s < t \leq\hat{T}$ are free of $p_n.$\\
\end{itemize}

We start with case a) and select this specific $\hat{T}\in [0,T[.$

Since $\overline{X}\in C([\hat{T},T], \mathcal{A}_+),$ we find a $\hat{T}\leq\tilde{T} < T$ s.t. for all $\tilde{T}\leq t \leq T$
\begin{equation*}
\lVert \overline{X} (T) - \overline{X} (\tilde{T})\rVert < \frac{1}{2}
\end{equation*}
and
\begin{equation}\label{dreiG}
\sup_{t\in [0,T]} \lVert \overline{X}(t) \rVert  = M < \infty.
\end{equation}
Since $\overline{X}(\tilde{T}) \in \mathcal{A},$ we get 
\begin{equation*}
\frac{1}{\varphi(p_n)} \lVert p_n \overline{X} (\tilde{T}) p_n \rVert_1 \leq \lVert \overline{X} (\tilde{T})\rVert < \infty.
\end{equation*}
Then we have 
\begin{equation}\label{erstb}
\varphi(p_n\overline{X} (t) p_n) = \varphi(p_n X (t) p_n)\text{ for }t\in[\tilde{T},T].
\end{equation}
Consequently
\begin{eqnarray*}
\infty>\|\overline{X} (\tilde{T})\|&\ge&\frac{1}{\varphi(p_n)}\lVert p_n \overline{X} (\tilde{T}) p_n\rVert_1\\ &=& \frac{1}{\varphi(p_n)}\lVert p_n (\overline{X} (T)+(\overline{X} (\tilde{T}) - \overline{X} (T))) p_n\rVert_1\\
&\stackrel{(\ref{erstb})}{\geq}&\left\lvert \frac{1}{\varphi(p_n)}\lVert p_n X (T) p_n\rVert_1 - 
\frac{1}{\varphi(p_n)}\lVert p_n (\overline{X} (T) - \overline{X} (\tilde{T})) p_n\rVert_1 \right\rvert\\
&>&n-1,
\end{eqnarray*}
a contradiction.\\

Now we suppose case b). We select a $\hat{T},$ s.t. $\lVert \overline{X}(t) - \overline{X}(\hat{t})\rVert<1$ for all $t,\hat{t}\in [\hat{T},T].$\\
 Then we may find a subsequence of projections $(p_n)$ (we denote it with the same index), s.t. $p_n \in \langle W_{t_n} - W_{s_n}|\hat{T} \leq s_n < t_n < T\rangle_{+}.$ Then $p_n$ is free of $X(s_n)$ and free of $X (T) - X (t_n) .$ Going to another subsequence we may assume $\tilde{T} \leq s_n < t_n \leq s_{n+1} < t_{n+1} < T.$ Now we estimate
\begin{eqnarray}\label{sechsG}
\frac{1}{\varphi(p_n)} \lVert p_n X (s_n) p_n \rVert_1 &+& \frac{1}{\varphi(p_n)}\lVert p_n (X (T) - X (t_n) )p_n \rVert_1 \nonumber \\
= \frac{1}{\varphi(p_n)} \lVert p_n \overline{X}(s_n) p_n \rVert_1 &+& \frac{1}{\varphi(p_n)}\lVert p_n (\overline{X} (T) - \overline{X} (t_n))p_n \rVert_1 \leq M+1 
\end{eqnarray}
according to \eqref{einsG}.

Since we have for $t>t_n$ that $\frac{1}{\varphi(p_n)} \Vert p_n (X (t) - X (t_n)) p_n \rVert_1 = \frac{1}{\varphi(p_n)} \Vert p_n (\overline{X} (t) - \overline{X} (t_n)) p_n \rVert_1$ and $\overline{X}\in C([0,T], \mathcal{A}),$ we see that 
\begin{equation*}
\lim_{n\rightarrow\infty} \frac{1}{\varphi(p_n)} \lVert p_n (X (T) - X (t_n)) p_n\rVert_1 = 0,
\end{equation*}
which implies there exists an $N\in\mathbb{N}$ s.t. for all $n \geq N$ we have
\begin{equation}\label{siebenG}
\frac{1}{\varphi(p_n)} \lVert p_n X (t_n) p_n \rVert_1 \geq 2M + 1,
\end{equation}
Now, we see with \eqref{sechsG} and the freeness of $p_n$ that 
\begin{eqnarray*}
1>\frac1{\varphi(p_n)}\|p_n (\overline{X}(s_{n+1})-\overline{X}(t_n))p_n\|_1&=&
\frac1{\varphi(p_n)}\|p_n (X(s_{n+1})-X(t_n))p_n\|_1 \\
&\ge & \left|\frac1{\varphi(p_n)} \|p_n X(t_n)p_n\|_1 -\frac1{\varphi(p_n)}\|p_n X(s_{n+1})p_n\|_1 \right|\\
 &\ge & 2M+1-M=M+1>1,
\end{eqnarray*}
 a contradiction.\\
\item  Suppose $X (T)$ is not invertible.  We find a sequence of projections $(p_n)$ in the von Neumann algebra generated by the self-adjoint element $X(T)$ such hat
\begin{equation}\label{erst1}
\frac{1}{\varphi(p_n)}\lVert p_n X (T) p_n\rVert_1\longrightarrow0,\text{ }n\rightarrow\infty.
\end{equation}
In case a) we find a $\hat{T} < T$ and the sequence $(p_n) \in \mathcal{A}_{\hat{T}}.$ 

We know that $\overline{X}(T)$ is invertible and therefore 
$$0<\frac{1}{\lVert (\overline{X} (T))^{-1}\rVert}\leq\frac{1}{\varphi(p_n)}\lVert p_n \overline{X} (T) p_n\rVert_1.$$
Again we consider for all reference processes the two cases a) and b) as in 1.\\

In this case we select the interval $[\hat{T} , T]$ instead of $[0,T]$ and $X(\hat{T})$ as starting value. Then according to \eqref{einsG} in the definition of a reference process $\overline{X},$ we have 

\begin{equation}\label{neunG}
\lVert p_n X (t) p_n \rVert_1 = \lVert p_n \overline{X} (t) p_n \rVert_1 \text{ for }t\in [\hat{T} , T].
\end{equation}

 Since both processes are continuous mappings $$\overline{X},X:[0,T]\longrightarrow\mathcal{A},$$
i.e. $\overline{X},X\in C([0,T],\mathcal{A}),$ we can find a $\tilde{T}\in[\hat{T}, T[,$ s.t. 
\begin{equation*}
\left\lvert \frac{1}{\lVert(\overline{X} (T))^{-1}\rVert}-\frac{1}{\lVert(\overline{X} (\tilde{T}))^{-1}\rVert}\right\rvert<\frac{1}{4\lVert(\overline{X} (T))^{-1}\rVert}
\end{equation*}
and 
\begin{equation*}
\lVert X (T)-X (\tilde{T}) \rVert<\frac{1}{4\lVert(\overline{X} (T))^{-1}\rVert}.
\end{equation*}

Thus, for all $n\in\mathbb{N}:$
\begin{equation*}
\beta_n = \frac{1}{\varphi(p_n)}\lVert p_n (X (T)-X (\tilde{T}))p_n\rVert_1\leq\frac{1}{\varphi(p_n)} \varphi(p_n) \lVert X (T)-X (\tilde{T})\rVert< \frac{1}{4\lVert(\overline{X} (T))^{-1}\rVert}.
\end{equation*}
By \eqref{neunG} 
\begin{equation}\label{dreizehnG}
\varphi(p_n \overline{X} (t) p_n) = \varphi(p_n X (t) p_n)\text{ for }t\in[\tilde{T}, T].
\end{equation}
In case b) first we may find $\tilde{T}\in ]0,T[$ as above (but independent of any $\hat{T}$). By case b) equation \eqref{dreizehnG} is fulfilled for $t=\tilde{T}.$\\

Consequently, it holds in both cases that
\begin{eqnarray*}
\frac{1}{\varphi(p_n)}\lVert p_n X (T) p_n\rVert_1 &=& \frac{1}{\varphi(p_n)}\lVert p_n (X (\tilde{T})+(X (T) - X (\tilde{T}))) p_n\rVert_1\\
&\stackrel{(\ref{dreizehnG})}{\geq}&\left\lvert \frac{1}{\varphi(p_n)}\lVert p_n \overline{X} (\tilde{T}) p_n\rVert_1 - 
\frac{1}{\varphi(p_n)}\lVert p_n (X (T) - X (\tilde{T})) p_n\rVert_1 \right\rvert\\
&>&\frac{3}{4\lVert (\overline{X} (T))^{-1}\rVert}-\beta_n 
\geq\frac{1}{2\lVert(\overline{X} (T))^{-1}\rVert}>0.
\end{eqnarray*}

This is a contradiction to the assumption (\ref{erst1}) and hence $X(T)$ is invertible. $\hfill\qed$

\end{enumerate}
\end{proof}

\color{black}
\begin{theorem}\label{teofunf}
Let $V_0\in\mathcal{A}_{+}$ be given. Let $a,\sigma:[0,\infty[\rightarrow\left<V_0,\text{id}\right>_+$ be continuous and $b>0$ be a constant such that \eqref{feller} holds. Then the free SDE 


\begin{eqnarray}\label{globalo}
\text{d}{V}(t)&=&\biggl(\left(a(t)-\frac{\varphi(\sigma(t))\sigma(t) + \sigma(t)^2}{8}\right)\frac{1}{4}{V}^{-1}(t) + \frac{1}{4}{V}^{-1}(t) \left(a(t)-\frac{\varphi(\sigma(t))\sigma(t) + \sigma(t)^2}{8}\right)\nonumber\\ &-& \frac{b}{2} V(t)\biggr)\text{d}t+\frac{\sigma(t)}{4}\text{d}W(t) + \text{d}W(t) \frac{\sigma(t)}{4},\quad V(0)=V_0 >0,
\end{eqnarray}

for $t\in[0,\infty[, $ has a global solution  $V\in C([0,\infty[,\mathcal A_+)$.
\end{theorem}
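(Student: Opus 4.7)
The plan is to combine Kargin's local existence theorem from \cite{kargin2011free} with the globally defined reference solution $\overline{V}$ of Corollary~\ref{extendedsquareroot} via an isometry argument, and then use Proposition~\ref{P:P1} to rule out blow-up; this is exactly the strategy sketched in the remarks after Corollary~\ref{extendedsquareroot}. For every $C>0$ the coefficients of \eqref{globalo} are locally operator Lipschitz on $\{X \in \mathcal{A}_+ : \lVert X\rVert + \lVert X^{-1}\rVert \leq C\}$, since inversion is operator Lipschitz on such sets. Kargin's theorem therefore yields a maximal $T > 0$ and a solution $V \in C([0,T[,\mathcal{A}_+)$ with $V(0) = V_0$. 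To prove global existence it suffices to show that whenever $T < \infty$, the process extends continuously to $V(T) \in \mathcal{A}_+$: restarting Kargin's theorem at $V(T)$ then contradicts maximality.

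\textbf{Isometry with the reference process.} Let $\overline{V} \in C([0,\infty[,\langle V_0,\mathrm{id}\rangle_+)$ be the global solution of \eqref{numberthreebeta} provided by Corollary~\ref{extendedsquareroot}. Fix $\hat{T}<T$ and a partition $\hat{T}=t_0<t_1<\cdots<t_n=T$ of $[\hat{T},T]$. I will prove by induction on $i$ that for every projection $p$ free from $\{W(s)-W(t_i) : s\in [t_i,t_{i+1}]\}$,
\begin{equation*}
\varphi\bigl(p\,V(t)^2\,p\bigr) \;=\; \varphi\bigl(p\,\overline{V}(t)^2\,p\bigr), \qquad t \in [t_i, t_{i+1}].
\end{equation*}
The base case is $V(\hat{T}) = \overline{V}(\hat{T})$. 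For the inductive step, apply Lemma~\ref{L:freeIto} to $V^2$ and the classical It\^{o} formula to $\overline{V}^2$: a direct expansion of $(\tfrac{\sigma}{4}dW + dW\tfrac{\sigma}{4})^2$ via Lemma~\ref{L:freeIto} produces the quadratic variation $\tfrac{1}{16}\bigl(2\varphi(\sigma)\sigma + \sigma^2 + \varphi(\sigma^2)\bigr)dt$, and, after taking $\varphi(p\cdot p)$ and using the tracial property together with freeness identities such as $\varphi(p\sigma p) = \varphi(p)\varphi(\sigma)$ and $\varphi(p V A V^{-1} p) = \varphi(p)\varphi(A)$, the map $t\mapsto \varphi(p V(t)^2 p)$ satisfies
\begin{equation*}
\frac{d}{dt}\varphi(pV^2p) \;=\; \varphi(p)\,\varphi(a) \;-\; b\,\varphi(pV^2p).
\end{equation*}
The analogous classical It\^{o} computation on \eqref{numberthreebeta} produces the identical ODE for $\varphi(p\,\overline{V}^2 p)$: the constants $\tfrac{\varphi(\sigma)\sigma+\sigma^2}{8}$ in \eqref{globalo} and $\tfrac{1}{2}\sqrt{(\varphi(\sigma^2)+\varphi(\sigma)^2)/2}$ in \eqref{numberthreebeta} are engineered precisely to make the drift and quadratic-variation contributions coincide. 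Matching initial data at $t_i$ and uniqueness for the linear ODE then close the induction.

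\textbf{Extension past $T$.} The isometry just established is exactly condition \eqref{einsG} of Proposition~\ref{P:P1} applied to the free It\^{o} process $X = V^2$ with reference process $\overline{X} = \overline{V}^2$; taking $p = \mathrm{id}$ further yields $V^2 \in C([0,T], L_1(\varphi)_+)$. Part~(1) of Proposition~\ref{P:P1} then gives $V(T)^2 \in \mathcal{A}$, and part~(2), applied with $\overline{V}(T)^2$ invertible (a consequence of $\overline{V}\in C([0,T],\langle V_0,\mathrm{id}\rangle_+)$) and $V(t)^2$ invertible for $t<T$, yields invertibility of $V(T)^2$; hence $V(T) = \sqrt{V(T)^2} \in \mathcal{A}_+$. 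Maximality of $T$ is contradicted, so $T = \infty$ and $V \in C([0,\infty[,\mathcal{A}_+)$.

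The main obstacle is the isometry step. Carrying out the free and classical It\^{o} expansions for the compressed squared processes and verifying that every cross term produced by Lemma~\ref{L:freeIto} combines with its classical counterpart to give identical drift and quadratic-variation coefficients at the level of $\varphi(p\cdot p)$ is exactly what motivates the unusual form of the drift correction $\tfrac{\varphi(\sigma)\sigma+\sigma^2}{8}$ in \eqref{globalo} and of the reference diffusion coefficient in \eqref{numberthreebeta}. This matching relies crucially on freeness of $p$ from the relevant $W$-increments; without it, extra non-scalar terms appear and the coincidence of the two ODEs breaks down, invalidating the whole extension argument.
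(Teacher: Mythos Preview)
Your overall architecture matches the paper's proof: Kargin's local existence on a maximal $[0,T[$, comparison with the global reference solution $\overline{V}$ of Corollary~\ref{extendedsquareroot} via $L_2$/$L_1$ isometries, and then Proposition~\ref{P:P1} to force $V(T)\in\mathcal{A}_+$, contradicting maximality. That is exactly the strategy carried out in the paper.

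The genuine gap is in your derivation of the compressed isometry $\varphi(pV(t)^2p)=\varphi(p\overline{V}(t)^2p)$. You compute the free It\^{o} expansion of $V^2$ and then invoke ``freeness identities such as $\varphi(p\sigma p)=\varphi(p)\varphi(\sigma)$ and $\varphi(pVAV^{-1}p)=\varphi(p)\varphi(A)$'' to collapse the drift to $\varphi(p)\varphi(a)-b\,\varphi(pV^2p)$. These identities are valid when $p$ is free from the algebra containing $\sigma$, $A=a-\tfrac{\varphi(\sigma)\sigma+\sigma^2}{8}$ and $V(t)$; they are \emph{not} valid for $p\in\langle V_0,\mathrm{id}\rangle_+$. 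In that case $p$ and $\sigma$ (and $A$) commute but are certainly not free, so $\varphi(p\sigma p)=\varphi(p\sigma)\neq\varphi(p)\varphi(\sigma)$ in general, and there is no reason for $\varphi(pVAV^{-1}p)$ to simplify since $p$ neither commutes with nor is free from $V(t)$ once $t>0$. Your ODE therefore fails precisely for the class of projections $p\in\langle X(\hat{T}),\mathrm{id}\rangle_+$ that appears in condition~\eqref{einsG} and is used in case~a) of the proof of Proposition~\ref{P:P1}. Without this second isometry the reference-process hypothesis is not verified and the extension argument does not go through.

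The paper circumvents this in two ways you omit. First, it does not apply It\^{o} directly to $V^2$; instead it builds discretizations $V_{\mathcal Z},\overline{V}_{\mathcal Z}$ where on each subinterval one solves only the drift ODE and adds the Brownian increment at the endpoint. On the initial interval both discretizations live in the commutative algebra $\langle V_0,\mathrm{id}\rangle_+$ and literally coincide; afterwards the matching second moments of $W$- and $B$-increments, together with the trace ODE $\frac{dy}{dt}=\alpha-by$, propagate the $L_2$ isometry. Second, for projections $p\in\langle V_0,\mathrm{id}\rangle_+$ the paper does not attempt your freeness reductions at all: it passes to the corner $p\mathcal{A}p$, rewrites the SDE there with coefficients $pap,\,p\sigma p\in\langle pV_0p,\mathrm{id}\rangle_+$, and reruns the same discretization argument. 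Your direct It\^{o} shortcut works cleanly only for $p=\mathrm{id}$ (the first isometry) and, by freeness, for $p$ free of $V(t)$ (the third); the middle case genuinely needs the corner-algebra manoeuvre.
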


\begin{proof}
1. In the first step we choose a maximal interval $[0,T[,$ where a solution of the equation 
\begin{eqnarray*}
\text{d}{V}(t)&=&\biggl(\left(a(t)-\frac{\varphi(\sigma(t))\sigma(t) + \sigma(t)^2}{8}\right)\frac{1}{4}{V}^{-1}(t) + \frac{1}{4}{V}^{-1}(t) \left(a(t)-\frac{\varphi(\sigma(t))\sigma(t) + \sigma(t)^2 }{8}\right)\nonumber\\ &-& \frac{b}{2} V(t)\biggr)\text{d}t+\frac{\sigma(t)}{4}\text{d}W(t) + \text{d}W(t) \frac{\sigma(t)}{4}, \quad V(0)=V_0 >0,
\end{eqnarray*}
for $t\in[0,T[, $ exists according to \cite{kargin2011free}. By Corollary~\ref{extendedsquareroot} we know that the solution for 
\begin{eqnarray*}
\text{d}{\overline{V}}(t)&=&\left(\frac{1}{2}\left(a(t)-\frac{\varphi(\sigma(t))\sigma(t) + \sigma(t)^2}{8}\right){\overline{V}}^{-1}(t)  - \frac{b(t)}{2} \overline{V}(t) \right)\text{d}t+\frac{1}{2}\sqrt{\frac{\varphi(\sigma(t)^2)}{2} + \frac{\varphi(\sigma(t))^2}{2}}\text{d}B(t),\\ &\quad & \overline{V}(0)=V_0 >0
\end{eqnarray*}
for $t\in[0,\infty[, $ exists globally. First we prove the following three isometries.
\begin{eqnarray}\label{isos}
 \lVert V(t)\rVert_2 &=& \lVert \overline{V}(t)\rVert_2\text{ for }t\in[0,T[\nonumber\\
 \lVert p V (t)^2 p\rVert_1 &=& \lVert p\overline{V} (t)^2 p\rVert_1\text{ for }t\in[0,T[\text{ and }p\in\left<V_0,\text{id}\right>_{+}\\  
 \lVert p V (t)^2 p\rVert_1 &=& \lVert p\overline{V} (t)^2 p\rVert_1\text{ for }t\in[0,T[\text{ and }p\text{ free of }V(t)\nonumber
\end{eqnarray}
The third isometry directly follows from the first by freeness.

Note that the norms on the left hand side of \eqref{isos} are to be understood in the sense of 
$$ \lVert V(t)\rVert_2^2 = \lVert V(t)^2\rVert_1$$
and the norms on the right hand side of \eqref{isos} in the sense of 
$$ \lVert \overline{V}(t)\rVert_2^2 =  \lVert \overline{V}(t)\rVert_{L_2 (\mathbb{P},L_2 (\varphi))}^2 = \lVert \overline{V}(t)^2\rVert_{L_1 (\mathbb{P},L_1 (\varphi))} = \lVert \overline{V}(t)^2\rVert_1 .$$
The context will always make it clear which norms are meant. No confusion will arise at any point.

We approximate the solutions  on  $[t_0 , T[$
and thus, select a partition $\mathcal Z$ of the interval $[t_0,T]$ namely $0=t_0 < t_1<\dots<t_n=T.$ We will omit the variable ``$t$'' in the expression of $a$ and $\sigma.$ The proof for the isometry 
$$\lVert V(t)\rVert_2 = \lVert \overline{V}(t)\rVert_2\text{ for }t\in[0,T[$$ is basically a complete induction. We will show it for $[t_0 , t_1]$ and then the step from $[t_0 , t_1]$ to $[t_1,t_2]$ to illustrate the procedure. However, note that $V(t_n) = V(T)$ is undefined at this point, which is why the last step only shows the isometry for the interval $[t_{n-1},t_{n}[.$
 We start with the interval $[t_0,t_1[$. For $t\in[t_0,t_1[$ we have
$${\overline{V}_\mathcal Z (t)}=V_0 + \int_{t_0}^{t} \left(\frac{1}{2}\left(a-\frac{\varphi(\sigma)\sigma + \sigma^2}{8}\right){\overline{V}_\mathcal Z^{-1} (s)}  - \frac{b}{2} \overline{V}_\mathcal Z(s)\right)\text{d}s.$$
and
$${V_\mathcal Z (t)}=V_0 + \int_{t_0}^{t} \left(\left(a-\frac{\varphi(\sigma)\sigma + \sigma^2}{8}\right)\frac{1}{4}{V_\mathcal Z^{-1} (s)} + \frac{1}{4}{V_\mathcal Z^{-1} (s)} \left(a-\frac{\varphi(\sigma)\sigma + \sigma^2}{8}\right) - \frac{b}{2} V_\mathcal Z(s)\right)\text{d}s.$$
Thus, we have by an easy approximation by step functions that $\overline{V}_\mathcal Z (t), V_\mathcal Z (t) \in\left<V_0,\text{id}\right>_+ ,$ a commutative algebra, and therefore
$$\varphi(V_{\mathcal Z} (t)^2) = \varphi(\overline{V}_{\mathcal Z} (t)^2) = \lVert \overline{V}_{\mathcal Z}  (t) \rVert_{L_2 (\mathbb{P},L_{2} (\varphi))}^2.$$
By adding the classical Brownian motion term $\frac{1}{2}\sqrt{\frac{\varphi(\sigma^2)}{2} + \frac{\varphi(\sigma)^2}{2}} B(t_1)$ and the free Brownian motion term $\frac{\sigma}{4} W(t_1) + W(t_1)\frac{\sigma}{4},$ respectively 
we define 
\begin{eqnarray*}
\overline{V}_{\mathcal Z}(t_1) &=& \overline{V}_\mathcal Z (t_1-) + \frac{1}{2}\sqrt{\frac{\varphi(\sigma^2)}{2} + \frac{\varphi(\sigma)^2}{2}} B(t_1)\\
V_{\mathcal Z} (t_1) &=& V_\mathcal Z (t_1-) + \frac{\sigma}{4} W(t_1) + W(t_1)\frac{\sigma}{4},
\end{eqnarray*}
where the first one is just a discretization of the global solution of Corollary~\ref{extendedsquareroot} (which serves as our ``reference solution'') evaluated at $t_1$ and $V_\mathcal Z (t_1-) $ and $\overline{V}_\mathcal Z (t_1-),$ respectively is the local solution for $t\in[t_0 , t_1[$. 
Since $\sigma\in\left<V_0,\text{id}\right>_{+}$ is free of $W(t_1)$ and $W(t_1)$ has the properties $ \varphi(W(t_1)) = 0$ and $\varphi(W(t_1)^2) = t_1$  (the same properties hold for $B(t_1)$),  we conclude with 
\cite[p.~829, equation (8)]{kargin2011free} that 
$$\varphi\otimes\mathbb{P}\left(\left(\frac{1}{2}\sqrt{\frac{\varphi(\sigma^2)}{2} + \frac{\varphi(\sigma)^2}{2}} B(t_1)\right)^2\right)=\varphi\left(\left(\frac{\sigma}{4} W(t_1) + W(t_1)\frac{\sigma}{4}\right)^2\right) = \left( \frac{1}{8} \varphi(\sigma)^2 + \frac{1}{8} \varphi(\sigma^2) \right) t_1$$ 
and see by the independence, resp. the freeness that
$$  \varphi(V_{\mathcal Z} (t_1)^2)=\varphi(\overline{V}_{\mathcal Z} (t_1)^2) =\lVert \overline{V}_{\mathcal Z}  (t_1) \rVert_{L_2 (\mathbb{P},L_{2} (\varphi))}^2.$$
Note that when the context is clear we will denote the product measure $\varphi\otimes\mathbb{P}$ for a classical Brownian motion term, e.g. in the above left hand expression $\varphi\otimes\mathbb{P}\left(\left(\frac{1}{2}\sqrt{\frac{\varphi(\sigma^2)}{2} + \frac{\varphi(\sigma)^2}{2}} B(t_1)\right)^2\right),$ by just ``$\varphi$''. No confusion will arise at any point.

On $[t_1, t_2[$ we again consider approximations of our solutions starting in $\overline{V}_\mathcal Z(t_1)$ and $V_\mathcal Z(t_1),$ (not equal but $L_2$-isometric) respectively:
\begin{eqnarray*}
{\overline{V}_\mathcal Z (t)}&=&{\overline{V}_\mathcal Z (t_1)} + \int_{t_1}^{t} \left(\frac{1}{2}\left(a-\frac{\varphi(\sigma)\sigma + \sigma^2}{8}\right){\overline{V}_\mathcal Z^{-1} (s)}  - \frac{b}{2} \overline{V}_\mathcal Z(s)\right)\text{d}s,\\
{V_\mathcal Z (t)}&=&{V_\mathcal Z (t_1)} + \int_{t_1}^{t} \left(\left(a-\frac{\varphi(\sigma)\sigma + \sigma^2}{8}\right)\frac{1}{4}{V_\mathcal Z^{-1} (s)} +\frac{1}{4} {V_\mathcal Z^{-1} (s)} \left(a-\frac{\varphi(\sigma)\sigma + \sigma^2}{8}\right) - \frac{b}{2} V_\mathcal Z(s)\right)\text{d}s
\end{eqnarray*}
Defining $\overline{Y} (t) := \overline{V}_{\mathcal Z} (t)^2 , $ resp. $Y(t) := V_{\mathcal Z} (t)^2$ and applying the trace we get a general ODE of the form $$\frac{\text{d}y}{\text{d}t} = \alpha-by $$ with $\alpha=\varphi(a)-\frac{\varphi(\sigma)^2 + \varphi(\sigma^2)}{8}$ and $y\in\{\varphi(\overline{Y} (t)),\;\varphi(Y(t))\}$ 
with the initial value $$y(t_1)=\varphi(Y(t_1))=\varphi(\overline{Y}(t_1)).$$ 
Since this ODE has a unique solution we get 
$$\lVert \overline{V}_\mathcal Z (t) \rVert_2^2=\varphi(\overline{Y}(t)) = \varphi(Y(t))=\lVert V_\mathcal Z (t) \rVert_2^2\text{ for }t\in[t_1,t_2[.$$
Again we add the classical Brownian motion term $\frac{1}{2}\sqrt{\frac{\varphi(\sigma^2)}{2} + \frac{\varphi(\sigma)^2}{2}} \left(B(t_2) -  B(t_1)\right)$ and the free Brownian motion term $\frac{\sigma}{4} (W(t_2) - W(t_1)) + (W(t_2) - W(t_1))\frac{\sigma}{4},$ respectively and get 
\begin{eqnarray*}
\overline{V}_{\mathcal Z}(t_2) &=& \overline{V}_\mathcal Z (t_2-) + \frac{1}{2}\sqrt{\frac{\varphi(\sigma^2)}{2} + \frac{\varphi(\sigma)^2}{2}} \left(B(t_2) -  B(t_1)\right) \\
V_{\mathcal Z} (t_2) &=& V_\mathcal Z (t_2-) + \frac{\sigma}{4} (W(t_2) - W(t_1)) + (W(t_2) - W(t_1))\frac{\sigma}{4}.
\end{eqnarray*}
As before by the independence, resp. freeness we conclude 
$$\varphi(\overline{V}_{\mathcal Z} (t_2)^2) = \varphi(V_{\mathcal Z} (t_2)^2).$$
We may extend $\overline{V}_\mathcal Z, $ resp. $V_\mathcal Z$ beyond the interval $[0,t_2[$. In the same respect as above we have for both processes extensions on $[0,t_n[$ such that
$$\varphi(\overline{V}_{\mathcal Z} (t)^2) = \varphi(V_{\mathcal Z} (t)^2)\text{ for all  }t\in [0,t_n[.$$
Since these isometries hold by the continuity of the solutions for all partitions of $[0,T],$ we see that $V_\mathcal Z$ converges for $|\mathcal Z|\to 0$, i.e. if the length of the partition converges to $0$:
$$\varphi(\overline{V} (t)^2)=\displaystyle\lim_{|\mathcal Z|\rightarrow 0} \varphi(\overline{V}_{\mathcal Z} (t)^2)=\displaystyle\lim_{|\mathcal Z|\rightarrow 0} \varphi(V_{\mathcal Z} (t)^2) = \varphi(V (t)^2)\text{ for all }t\in[0,T[.$$
We proceed by showing the second isometry of \eqref{isos} and therefore introduce the equation
\begin{eqnarray*}
p V p (t) &=& p V_0 p + \int_0^t \biggl( p \left( a-\frac{\varphi(\sigma)\sigma + \sigma^2}{8}\right)p \frac{1}{4}(p V p)(s)^{-1}  +p \frac{1}{4}(p V p)(s)^{-1} p \left( a-\frac{\varphi(\sigma)\sigma + \sigma^2}{8}\right)p\\ &-& p\frac{b}{2} p V p (s)\biggr)\text{d}s+\int_0^t p \frac{\sigma}{2}\;p\;\text{d}W(s) p + \int_0^t p\;\text{d}W(s) p\;p \frac{\sigma}{2}\;p ,\quad p V p (0) = p V_0 p \in p\mathcal{A}_ {+} p ,
\end{eqnarray*}
with $a,\sigma\in\left<V_0,\text{id}\right>_{+}\text{ and }b>0\text{, all strictly positive,} $ in $p \mathcal{A} p. $ Since $p\in\left< V_0,\text{id}\right>_{+}$ by assumption, we have $pap,\;p \sigma p\in\left< p V_0 p, \text{id}\right>_{+}$ and the proof for the $L_2$-isometry above can be mimicked. 
For this we consider $\text{d}(p V p (t))^2 = 2 p V p(t)\;\text{d}(pVp(t))$ to get a similar general ODE to the case above.
\medskip

2. According to \cite{kargin2011free}, we know that the process $V(t)^2$ exists for $t<T$.
The reference solution $\mathbb{E}_{\varphi}\left(\overline{V}(t)^2\right)$ exists globally with values in $\mathcal A$. Because of the isometry we can define
\begin{eqnarray*}
V(T)=L_2-\lim_{t\to T}V(t)&=&V_0+\int_0^T \biggl(\left(a-\frac{\varphi(\sigma)\sigma + \sigma^2}{8}\right)\frac{1}{4}V^{-1}(t) + \frac{1}{4}V^{-1}(t) \left(a-\frac{\varphi(\sigma)\sigma + \sigma^2}{8}\right)\\&-&\frac{b}{2} V(t)\biggr)\text{d}t
+\frac{\sigma}4W(T) + W(T)\frac{\sigma}4.
\end{eqnarray*}
Using Proposition \ref{P:P1} (1.) we can deduce that $V(T)\in\mathcal A$. Note that in Proposition \ref{P:P1} we choose $\overline{X}(t) = \mathbb{E}_{\varphi}\left(\overline{V}(t)^2\right)$ as the reference process.

3. Step 2 told us that $V(T)\in\mathcal A$.  We want to extend the (unique) solution $V(t)$ beyond $T$. To apply the basic result in \cite{kargin2011free}, we need the invertibility of $V(T).$ This would allow us an extension and the solution is global. Again by Proposition \ref{P:P1} (2.), we see that $V(T)$ is invertible.
 Therefore the solution to \eqref{globalo} is global. $\hfill\qed$
\end{proof}


Having established the existence of the global solution $V(t)$ we return to the proof of the main theorem (Theorem \ref{T:CIRfreeBM2}). We apply the free It\^{o} formula and get the global solution to the free CIR equation:\\

%

Let $Y(t)=V (t)^2$. Then according to the free It\^{o} formula: 
\begin{eqnarray*}
\text{d}Y(t) &=& \bigg( V(t) \left( a- \frac{\varphi(\sigma) \sigma + \sigma^2}{8} \right)\frac{1}{4}V^{-1}(t) + \frac{1}{4}  V^{-1}(t) \left( a- \frac{\varphi(\sigma) \sigma + \sigma^2}{8} \right) V(t)   + \frac{1}{2}\left(a-\frac{\varphi(\sigma) \sigma + \sigma^2}{8}\right)\\ &-& b V^2 (t) +    \frac{\sigma\varphi(\sigma)}{8} + \frac{\sigma^2}{16} + \frac{\varphi(\sigma^2)}{16}\bigg)\text{d}t + V(t)\frac{\sigma}{4}\text{d}W(t) + V(t)\text{d}W(t)\frac{\sigma}{4} + \frac{\sigma}{4}\text{d}W(t)V(t)\\ &+& \text{d}W(t)\frac{\sigma}{4} V(t) ,\quad Y(0) = Y_0. \\
\end{eqnarray*}
Furthermore, we consider another free stochastic differential equation 
\begin{equation*}
\text{d}X(t) = (a- b X(t))\text{d}t + \frac{\sigma}{2}\sqrt{X(t)}\text{d}W(t) + \text{d}W(t)\sqrt{X(t)}\frac{\sigma}{2},\quad X(0) = Y_0.
\end{equation*}

Applying the trace, writing $V(t) = (V(t) - V_0) + V_0$, using the freeness of $\sigma$ to 
$(V(t) - V_0)\text{d}W(t)$ and the fact that the algebra $\left<V_0 , \text{id}\right>$ is commutative, we get
\begin{equation*}
\text{d}\varphi(Y(t)) = \varphi\left(a-b Y(t)\right)\text{d}t + \varphi\left(\frac{\sigma}{2}\sqrt{Y(t)}\text{d}W(t)\right) + \varphi\left(\text{d}W(t)\sqrt{Y(t)}\frac{\sigma}{2}\right),\quad \varphi(Y(0)) = \varphi(Y_0).
\end{equation*}
Similar we have 
\begin{equation*}
\text{d}\varphi(X(t)) = \varphi\left(a-b X(t)\right)\text{d}t + \varphi\left(\frac{\sigma}{2}\sqrt{X(t)}\text{d}W(t)\right) + \varphi\left(\text{d}W(t)\sqrt{X(t)}\frac{\sigma}{2}\right),\quad \varphi(X(0))= \varphi(Y_0).
\end{equation*}

We see that $\lVert Y(t) \Vert_1 = \varphi(Y(t)) = \varphi(X(t)) = \lVert X(t) \rVert_1$ as long as the solution $X$ exists on a maximal interval $[0,T[$ (the same arguments as in \cite{kargin2011free} for a unique solution, since the equation is locally Lipschitz). The positivity of $Y(t),$ respectively $X(t)$ ensures that $\varphi(Y(t)) = \lVert Y(t) \rVert_1,$ respectively $\varphi(X(t)) = \lVert X(t) \rVert_1.$

Since the algebras generated by $Y(t)$ and by $X(t)$ $(t\in [0,T[),$ respectively, coincide, we have 

\begin{equation*}
\lVert p Y(t) p \rVert_1 = \varphi(p Y(t) p) = \varphi(p X(t) p) = \lVert p X(t) p \rVert_1 \text{ for }p\text{ free of }X(t).
\end{equation*}
The same holds for $p\in\left<Y_0 , \text{id}\right>_{+}$ (applying the trace to an element of $p\mathcal{A}p$).

In addition, we have immediately
\begin{equation*}
\varphi(p(Y(T) - Y(t))p) = \varphi(p(X(T) - X(t))p), \text{ for all }p\text{ free of }(X(T) - X(t)) \text{(resp. }(Y(T) - Y(t))\text{)}.
\end{equation*}


Now we apply the notion of reference processes. Proposition \ref{P:P1} shows that $X(T)$ is invertible and with \cite{kargin2011free} we continue (note that in \eqref{sechsG} in the proposition we may use $\left| \varphi(p_n (X (T) - X (t_n))p_n)\right|$ instead of $\varphi(p_n (X (T) - X (t_n))p_n)$). $\hfill\qed$\\


We conclude with the following corollary stating the global existence of the classical free CIR equation.
\begin{corollary}\label{coro4}
Let $X_0\in\mathcal{A}_{+}$ be given. Let $a,\sigma:[0,\infty[\rightarrow\left<X_0,\text{id}\right>_+$ be continuous and $b>0$ be a constant such that \eqref{feller} holds.
Then the free SDE
\begin{equation*}
\text{d}X(t)=\left(a(t)-b X(t)\right)\text{d}t+\sqrt[\leftroot{0}\uproot{1}4]{X(t)}\sqrt{\sigma(t)}\text{d}W(t)\sqrt{\sigma(t)}\sqrt[\leftroot{0}\uproot{1}4]{X(t)}, \quad X(0) = X_0 \in\mathcal{A}_+ ,
\end{equation*}
for $t\in[0,\infty[, $ has a global solution $X\in C([0,\infty[,\mathcal A_+)$.
\end{corollary}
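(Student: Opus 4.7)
The argument mirrors the final portion of the proof of Theorem~\ref{T:CIRfreeBM2}, with the non-classical diffusion $\frac{\sigma}{2}\sqrt{X}\,\text{d}W+\text{d}W\sqrt{X}\frac{\sigma}{2}$ replaced by the classical one $\sqrt[4]{X}\sqrt{\sigma}\,\text{d}W\sqrt{\sigma}\sqrt[4]{X}$. I would first invoke \cite{kargin2011free} to obtain a unique local solution $X\in C([0,T[,\mathcal{A}_+)$ on some maximal interval. This is legitimate because $X_0\in\mathcal{A}_+$ is strictly positive, so $x\mapsto\sqrt[4]{x}$ is operator smooth (hence locally operator Lipschitz) on a neighbourhood of $X_0$, and the remaining coefficient functions take values in the commutative subalgebra $\langle X_0,\mathrm{id}\rangle$. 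By Kargin's extension criterion, proving globality reduces to showing that $X(T)$ extends to an invertible element of $\mathcal{A}_+$.

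The candidate reference process is $Y(t):=V(t)^2$, with $V$ the global solution from Theorem~\ref{teofunf}. Applying the free It\^{o} formula (Lemma~\ref{L:freeIto}) to $Y=V^2$ yields precisely the expression for $\text{d}Y$ already displayed in the proof of Theorem~\ref{T:CIRfreeBM2}. Taking $\varphi$ on both sides kills the stochastic integral terms (which are $\varphi$-mean zero by the martingale property of the free It\^{o} integral), and using traciality together with commutativity in $\langle V_0,\mathrm{id}\rangle$, the drift collapses to $\varphi(a)-b\,\varphi(Y)$. Applying $\varphi$ directly to the classical free CIR SDE likewise gives $\frac{d}{dt}\varphi(X)=\varphi(a)-b\,\varphi(X)$. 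Since $\varphi(X(0))=\varphi(Y(0))$ and both functions solve the same linear ODE, uniqueness yields $\varphi(X(t))=\varphi(Y(t))$ for all $t\in[0,T[$.

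To make $Y$ a genuine reference process in the sense of the definition preceding Proposition~\ref{P:P1}, one needs the three localized trace identities. For projections $p\in\langle X_0,\mathrm{id}\rangle_+$ the ODE argument above transfers to $p\mathcal{A}p$ (since $p$ commutes with $a$, $\sigma$, $X_0$ and is free of the Brownian motion), giving $\varphi(pX(t)p)=\varphi(pY(t)p)$. For projections $p$ free of $X(t)$, equivalently free of $Y(t)$ as both processes live in the same filtration, the freeness factorization $\varphi(pXp)=\varphi(p)\varphi(X)$ combined with $\varphi(X)=\varphi(Y)$ gives the same identity. The analogous equality for increments $\varphi(p(X(T)-X(t))p)=\varphi(p(Y(T)-Y(t))p)$ follows in the same way, and positivity of $X,Y$ upgrades these trace identities to $L_1$-isometries.

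The conclusion is then automatic: Proposition~\ref{P:P1}(1) yields $X(T)\in\mathcal{A}$; since $Y(t)=V(t)^2$ is invertible for every $t\in[0,\infty[$ by Theorem~\ref{teofunf}, Proposition~\ref{P:P1}(2) gives invertibility of $X(T)$; and Kargin's theorem extends $X$ beyond $T$, contradicting maximality and forcing $T=\infty$. The main (and only) delicate step is the third paragraph, namely verifying all three clauses in the definition of a reference process for the classical diffusion; once those isometries are pinned down, the global extension follows from Proposition~\ref{P:P1} without further work.
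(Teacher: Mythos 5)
Your argument reaches the right conclusion but follows a genuinely different route from the paper. You take $Y=V(t)^2$ (with $V$ from Theorem~\ref{teofunf}) directly as the reference process, and you kill the traced stochastic integrals by the $\varphi$-mean-zero (martingale) property of the free It\^{o} integral, so that both $\varphi(X)$ and $\varphi(Y)$ at once satisfy the same linear ODE $\frac{d}{dt}\varphi = \varphi(a) - b\,\varphi$; no further inequality is needed. The paper instead evaluates the traced diffusion term $\varphi\bigl(\sqrt[4]{X}\sqrt{\sigma}\,\text{d}W\sqrt{\sigma}\sqrt[4]{X}\bigr)$ explicitly via the decomposition $\sqrt{X(t)}=\bigl(\sqrt{X(t)}-\sqrt{X_0}\bigr)+\sqrt{X_0}$ and the alternating-moment formula \cite[eq.~(1.14)]{mingo2017free}, extracts an effective scalar diffusion coefficient $\varphi(\sqrt{\sigma})^2$, and then uses as reference process the non-classical free CIR solution with $\sigma$ replaced by the scalar $\varphi(\sqrt{\sigma})^2\,\text{id}$; to legitimize that auxiliary equation the paper then invokes Jensen's inequality to verify the rescaled Feller condition $2\varphi(a)\geq\varphi(\sqrt{\sigma})^2$. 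Your version is shorter and avoids both the effective-coefficient extraction and the Jensen step; the paper's version makes the structural link between the classical free CIR and a rescaled non-classical one explicit, which carries extra information beyond bare existence. Both routes then coincide in the endgame (Proposition~\ref{P:P1}(1) for $X(T)\in\mathcal{A}$, Proposition~\ref{P:P1}(2) for invertibility, Kargin's extension). One caveat in your third paragraph: the claim that the localized trace identity for $p\in\langle X_0,\text{id}\rangle_+$ "transfers to $p\mathcal{A}p$" is not immediate, since $p$ need not commute with $V(t)$ or $X(t)$ for $t>0$; the paper handles this by introducing the auxiliary \emph{projected} SDE for $pVp$ in the proof of Theorem~\ref{teofunf}, and your argument would need the analogous construction rather than a straightforward restriction.
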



\begin{proof}
The global existence of a positive solution to the non-classical free CIR equation is obtained analogously. 
By writing $\sqrt{X(t)} = \left(\sqrt{X(t)} - \sqrt{X_0}\right) + \sqrt{X_0}$ and additionally employing \cite[p.~17, equation (1.14)]{mingo2017free}, combined with the freeness of $X_0$ and $\text{d}W(t)\left(\sqrt{X(t)} - \sqrt{X_0}\right)$ and the fact that $\varphi\left(\text{d}W(t)\right) =0,$ we get 
\begin{equation*}
\text{d}\varphi(X(t)) = \varphi\left(a(t) - b X(t)\right)\text{d}t + \varphi\left(\sqrt{\sigma(t)}\right)^2 \varphi\left(\text{d}W(t)\sqrt{X(t)}\right). 
\end{equation*}
Again we consider
\begin{equation*}
\text{d}\varphi(Y(t)) = \varphi\left(a(t) - b Y(t)\right)\text{d}t + \varphi\left(\frac{\varphi\left(\sqrt{\sigma(t)}\right)^2}{2} \text{d}W(t)\sqrt{Y(t)} \right) + \varphi\left(\sqrt{Y(t)}\text{d}W(t)\frac{\varphi\left(\sqrt{\sigma(t)}\right)^2}{2}\right).
\end{equation*}
Since $2\varphi(a(t))\geq \varphi(\sqrt{\sigma(t)})^2$ by Jensen's inequality, we get the global existence of the classical free CIR equation.
\end{proof}

We end this manuscript with the following corollary which states the existence of a global solution to an analogue of a free SDE discussed in \cite[p.840ff]{kargin2011free}. The proof is analogous to Corollary \ref{coro4}.

\begin{corollary}
Let $X_0\in\mathcal{A}_{+}$ be given. Let $a:[0,\infty[\rightarrow\left<X_0,\text{id}\right>_+$ be continuous and $b,c>0$ be constants such that \eqref{feller} holds.
Then the free SDE 
$$
\text{d}X(t)=\left(a(t)-b X(t)\right)\text{d}t +  \sqrt{\sigma} \sqrt[\leftroot{0}\uproot{1}4]{X(t)} \text{d}W(t)\sqrt[\leftroot{0}\uproot{1}4]{X(t)}\sqrt{\sigma},\quad X(0) = X_0 \in\mathcal{A}_+
$$
for $t\in[0,\infty[, $ has a global solution $X\in C([0,\infty[,\mathcal A_+)$.
\end{corollary}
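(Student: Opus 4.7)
The plan is to mimic the proof of Corollary~\ref{coro4} almost verbatim, replacing the role of $\sqrt{X(t)}$ by $\sqrt[\leftroot{0}\uproot{1}4]{X(t)}$ and exploiting the scalar nature of $\sigma$ (i.e. $\sigma = c\cdot\text{id}$, so $\sqrt{\sigma}$ commutes with everything). Start from the globally existing process $V\in C([0,\infty[,\mathcal{A}_+)$ of Theorem~\ref{teofunf}, set $Y(t):=V(t)^2$, and use the free Itô formula to write down $\mathrm{d}Y(t)$. Then $Y$, supplemented by the globally existing reference solution $\overline{V}^{2}$ from Corollary~\ref{extendedsquareroot}, will play the role of the reference process for the target SDE.

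Next I would compute $\mathrm{d}\varphi(X(t))$ for the equation in question. Splitting $\sqrt[\leftroot{0}\uproot{1}4]{X(t)} = (\sqrt[\leftroot{0}\uproot{1}4]{X(t)} - \sqrt[\leftroot{0}\uproot{1}4]{X_0}) + \sqrt[\leftroot{0}\uproot{1}4]{X_0}$, using the freeness of the free Brownian increment $\mathrm{d}W(t)$ from $\mathcal{W}_t$, the vanishing $\varphi(\mathrm{d}W(t))=0$, the cyclicity of $\varphi$ and the factorization \cite[p.~17, eq.~(1.14)]{mingo2017free}, the stochastic contribution collapses under the trace exactly as in Corollary~\ref{coro4}, leaving
\begin{equation*}
\mathrm{d}\varphi(X(t)) = \varphi(a(t) - bX(t))\,\mathrm{d}t .
\end{equation*}
The corresponding computation for $Y(t)$ (starting from the free Itô expansion of $V(t)^2$) gives the same drift structure up to scalar factors absorbed into the Feller-type inequality; by Jensen's inequality applied to $\sqrt{\sigma}$ the original Feller condition \eqref{feller} carries over to the required $2\varphi(a(t))\ge \varphi(\sqrt{\sigma})^{2}\varphi(\sqrt{X(t)})$-style bound. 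Hence the ODE for $\varphi(X(t))$ coincides with that for $\varphi(Y(t))$, and positivity of $Y$ transports to positivity of $X$ in the $L_1$-sense.

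Having established the trace-level identity, I would promote it to the projection-sliced identities required by the reference-process definition: for $p\in\langle X_0,\mathrm{id}\rangle_+$ or $p$ free of $X(t)$, the same freeness/cyclicity arguments give $\|pX(t)p\|_1 = \|pY(t)p\|_1$ and likewise for the increments $X(T)-X(t)$. Since $Y$ and its classical-Brownian counterpart $\overline{V}^2$ are globally well-behaved in $\mathcal{A}_+$, Proposition~\ref{P:P1}\,(1.)\,(2.) forces $X(T)\in\mathcal{A}$ and $X(T)$ invertible at the right endpoint $T$ of any maximal existence interval $[0,T[$ supplied by \cite{kargin2011free}. Invertibility permits re-application of Kargin's local existence from $X(T)$, contradicting the maximality of $T$ unless $T=\infty$, which gives the global solution in $C([0,\infty[,\mathcal{A}_+)$.

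The main obstacle is the same one that the entire machinery of Theorem~\ref{teofunf} and Proposition~\ref{P:P1} was built to overcome: the coefficient $\sqrt[\leftroot{0}\uproot{1}4]{X(t)}$ is even less regular at $0$ than $\sqrt{X(t)}$, so Kargin's local existence theorem offers no help in keeping $X(t)$ away from the boundary of $\mathcal{A}_+$. The trick is that we do not need a direct Lipschitz argument at all: global existence is obtained indirectly, by transferring positivity and invertibility from the reference process $Y$ via the $L_1$-isometries for sliced projections. The only technical novelty compared with Corollary~\ref{coro4} is the bookkeeping of the fourth-root factor when invoking the Mingo--Speicher factorization identity, which is straightforward once one notes that $\sqrt{\sigma}$ is a scalar and commutes with $\sqrt[\leftroot{0}\uproot{1}4]{X_0}$.
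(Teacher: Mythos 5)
Your proposal is correct and is essentially what the paper does; the paper's own proof is just the one-liner ``The proof is analogous to Corollary~\ref{coro4},'' and your elaboration—taking $Y(t)=V(t)^2$ as reference process, collapsing the stochastic term under the trace via freeness, cyclicity, and the scalar nature of $\sqrt{\sigma}$, matching the ODEs for $\varphi(X(t))$ and $\varphi(Y(t))$, and invoking Proposition~\ref{P:P1} to transfer membership in $\mathcal A$ and invertibility to $X(T)$—is the intended unfolding. A minor bookkeeping difference: in the paper's Corollary~\ref{coro4} cyclicity of the trace is applied first, so the quantity split as a difference plus $\sqrt{X_0}$ is $\sqrt{X(t)}$ rather than $\sqrt[\leftroot{0}\uproot{1}4]{X(t)}$, which keeps the Mingo--Speicher factorization cleaner; with $\sigma$ scalar this is an inessential distinction.
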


\section*{Funding}
This research received no external funding.
\section*{Conflicts of interest}
The authors declare no conflict of interest. 

{\footnotesize
\bibliographystyle{plainnat.bst}
\bibliography{final_version}
}


\end{document}